\def\today{\ifcase\month\or
  January\or February\or March\or April\or May\or June\or
  July\or August\or September\or October\or November\or December\fi
  \space\number\day, \number\year}
 \newtheorem{theorem}{Theorem}
 \newtheorem{lemma}[theorem]{Lemma}
 \newtheorem{proposition}[theorem]{Proposition}
 \newtheorem{corollary}[theorem]{Corollary}
 \theoremstyle{definition}
 \theoremstyle{remark}
\renewcommand{\S}{\mathbb{S}}
 \newcommand{\C}{\mathbb{C}}
 \newcommand{\R}{\mathbb{R}}
 \newcommand{\N}{\mathbb{N}}
 \newcommand{\dt}{\text{\rm d}t}
 \newcommand{\du}{\text{\rm d}u}
 \newcommand{\dx}{\text{\rm d}x}
\newcommand{\dsigma}{\text{\rm d}\sigma}
\newcommand{\dxi}{\text{\rm d}\xi}
 \newcommand{\dmu}{\text{\rm d}\mu}
 \newcommand{\dnu}{\text{\rm d}\nu}
    \renewcommand{\d}{\text{\rm d}}
\newcommand{\ov}{\overline}
\begin{document}

\title[]{Some sharp restriction inequalities on the sphere}
\author[Carneiro and Oliveira e Silva]{Emanuel Carneiro and Diogo Oliveira e Silva}
\date{\today}
\subjclass[2010]{42B10}
\keywords{Sphere, Fourier restriction, sharp inequalities, extremizers, convolution of surface measures.}
\address{IMPA - Instituto Nacional de Matem\'{a}tica Pura e Aplicada - Estrada Dona Castorina, 110, Rio de Janeiro, RJ, Brazil 22460-320.}
\email{carneiro@impa.br}
\address{Hausdorff Center for Mathematics, Universit\"{a}t Bonn, 53115 Bonn, Germany.}
\email{dosilva@math.uni-bonn.de}

\allowdisplaybreaks
\numberwithin{equation}{section}

\maketitle

\begin{abstract} In this paper we find the sharp forms and characterize the complex-valued extremizers of the adjoint Fourier restriction inequalities on the sphere
$$\big\|\widehat{f \sigma}\big\|_{L^{p}(\R^{d})} \lesssim \|f\|_{L^{q}(\S^{d-1},\sigma)}$$ 
in the cases $(d,p,q) = (d,2k, q)$ with $d,k \in \N$ and $q\in \R^+ \cup \{\infty\}$ satisfying: (a) $k = 2$, $q \geq 2$ and $3 \leq d\leq 7$;  \,(b) $k = 2$, $q \geq 4$ and $d \geq 8$;\, (c) $k \geq 3$, $q \geq 2k$ and $d \geq 2$. We also prove a sharp multilinear weighted restriction inequality, with weight related to the $k$-fold convolution of the surface measure. \end{abstract}

\section{Introduction}
Let $d\in\N$ and $(\mathbb{S}^{d-1},\sigma_{d-1})$ denote the $(d-1)$-dimensional unit sphere equipped with the standard surface measure $\sigma_{d-1}$. We omit the subscript on $\sigma_{d-1}$ when clear from the context and denote the total surface measure of this unit sphere by
\begin{equation}\label{Intro_area_sphere}
\omega_{d-1}:=\sigma\big(\S^{d-1}\big) = \frac{2\,\pi^{d/2}}{\Gamma(d/2)}.
\end{equation}
Given $r >0$ and $x_0 \in \R^d$, we denote by $B(x_0,r)$ the open ball of radius $r$ centered at $x_0$. If $x_0 = 0$ we simply write $B(r )$. If $f \in L^1(\S^{d-1})$, we define the Fourier transform of the measure $f\sigma$ by
\begin{equation*}\label{RN_derivative_conv}
\widehat{f\sigma}(\xi):=\int_{\mathbb{S}^{d-1}} e^{- i \zeta\cdot \xi}\, f(\zeta)\,\dsigma(\zeta)\ ; \ \  (\xi \in \R^d).
\end{equation*}
Our primary goal in this note is to find the sharp forms and characterize the extremizers of some adjoint Fourier restriction inequalities 
\begin{equation}\label{Intro_extension}
\big\|\widehat{f \sigma}\big\|_{L^{p}(\R^{d})} \lesssim \|f\|_{L^{q}(\S^{d-1})}.
\end{equation}
The full range $(d,p,q)$ for which \eqref{Intro_extension} holds is not yet fully understood, and this is the theme of the restriction conjecture in harmonic analysis (see \cite{T} for a survey on this theory). For our purposes the classical restriction theory is enough, as we consider only cases where the inequality \eqref{Intro_extension} is already established.

\medskip

Building up on the work of Christ and Shao \cite{CS, CS2}, Foschi \cite{F} recently obtained the sharp form of \eqref{Intro_extension} in the Stein-Tomas endpoint case $(d,p,q) = (3,4,2)$, showing that the constant functions are global extremizers. Here we extend this paradigm to other suitable triples $(d,p,q)$. In fact, defining 
\begin{equation*}
C(d,p,q) = \sup_{\stackrel{f \in L^{q}(\S^{d-1})}{ f \neq 0} }\frac{\big\|\widehat{f \sigma}\big\|_{L^{p}(\R^{d})}}{\|f\|_{L^{q}(\S^{d-1})}}\,,
\end{equation*} 
our first result is the following:
\begin{theorem}\label{Thm1}
Let $(d,p,q) = (d,2k, q)$ with $d,k \in \N$ and $q\in \R^+ \cup \{\infty\}$ satisfying: 
\begin{itemize}
\item[(a)]  $k = 2$, $q \geq 2$ and $3 \leq d\leq 7$;
\item[(b)]  $k = 2$, $q \geq 4$ and $d \geq 8$;
\item[(c)]  $k \geq 3$, $q \geq 2k$ and $d \geq 2$.
\end{itemize} 
Then 
\begin{equation}\label{Intro_sharp_constant}
C(d,p,q) = \omega_{d-1}^{-1/q} \ \|\widehat{\sigma}_{d-1}\|_{L^{p}(\R^{d})}.
\end{equation}
Moreover, the complex-valued extremizers of \eqref{Intro_extension} are given by
\begin{equation}\label{CV_ext}
f(\zeta) = c \,e^{i\xi\cdot \zeta},
\end{equation}
where $c \in \C\setminus \{0\}$ and $\xi \in \R^d$.
\end{theorem}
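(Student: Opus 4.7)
Since $p=2k$ is an even integer, Plancherel's theorem reduces the problem to an $L^2$ estimate on the $k$-fold convolution of the measure $f\sigma$:
\[
\big\|\widehat{f\sigma}\big\|_{L^{2k}(\R^d)}^{2k} \,=\, (2\pi)^d\,\|(f\sigma)^{*k}\|_{L^2(\R^d)}^2.
\]
My plan is to (i) establish the sharp inequality at the smallest admissible value of $q$ in each case, namely $q=2$ in (a) and $q=2k$ in (b) and (c); (ii) propagate the sharpness to larger $q$ via H\"older's inequality on $\S^{d-1}$, which is tight at constants; (iii) track the equality conditions throughout to obtain the extremizer characterization.

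The technical heart of step (i) is the pointwise Cauchy--Schwarz bound
\[
\bigl|((f_1\sigma)*\cdots*(f_k\sigma))(x)\bigr|^2 \,\leq\, \sigma^{*k}(x)\cdot\bigl((|f_1|^2\sigma)*\cdots*(|f_k|^2\sigma)\bigr)(x),
\]
obtained by disintegrating both sides as integrals over the variety $\{(\zeta_1,\ldots,\zeta_k)\in(\S^{d-1})^k:\zeta_1+\cdots+\zeta_k=x\}$ and applying Cauchy--Schwarz to the product $\prod_j f_j(\zeta_j)$ against the natural fiber measure. Dividing by $\sigma^{*k}(x)$ and integrating yields the sharp weighted multilinear restriction inequality advertised in the abstract,
\[
\int_{\R^d}\frac{\bigl|((f_1\sigma)*\cdots*(f_k\sigma))(x)\bigr|^2}{\sigma^{*k}(x)}\,\dx \,\leq\, \prod_{j=1}^k \|f_j\|_{L^2(\S^{d-1},\sigma)}^2.
\]
Equality in the underlying Cauchy--Schwarz forces the map $(\zeta_1,\ldots,\zeta_k)\mapsto\prod_j f_j(\zeta_j)$ to depend only on $\sum_j\zeta_j$ on almost every fiber; a rigidity argument (exploiting that each $\zeta_j$ can be varied tangentially while keeping $\sum_j\zeta_j$ fixed) then forces $f_j(\zeta)=c_j\,e^{i\xi\cdot\zeta}$ for some common $\xi\in\R^d$ and constants $c_j\in\C$, which is the source of the extremizer description \eqref{CV_ext}.

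Setting $f_1=\cdots=f_k=f$ in the pointwise Cauchy--Schwarz and integrating gives
\[
\big\|\widehat{f\sigma}\big\|_{L^{2k}}^{2k} \,\leq\, (2\pi)^d\int_{\R^d}\sigma^{*k}(x)\,(|f|^2\sigma)^{*k}(x)\,\dx \,=\, (2\pi)^d\bigl((|f|^2\sigma)*\sigma\bigr)^{*k}(0),
\]
using the antipodal symmetry of $\sigma$ and commutativity of convolution. The sharp constant \eqref{Intro_sharp_constant} then reduces to the scalar extremal inequality
\[
\bigl((g\sigma)*\sigma\bigr)^{*k}(0) \,\leq\, \omega_{d-1}^{-k}\Bigl(\int g\,\dsigma\Bigr)^k \sigma^{*(2k)}(0), \qquad g=|f|^2 \geq 0,
\]
with equality precisely when $g$ is constant. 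Once this is established at the minimal $q$ in each case, the extension to all larger $q$ is routine: H\"older's inequality $\|f\|_{L^{q_0}(\sigma)}\leq\omega_{d-1}^{1/q_0-1/q}\|f\|_{L^q(\sigma)}$ is saturated when $|f|$ is constant, and combined with the Cauchy--Schwarz equality analysis it identifies the extremizers exactly as in \eqref{CV_ext}.

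The principal difficulty lies in closing the scalar extremal inequality above. In case (a) at the Stein--Tomas endpoint $q=2$, the weight $\sigma*\sigma$ can be singular at the origin (as in $d=3$, where $\sigma*\sigma(x)\propto|x|^{-1}$), so the argument requires the delicate bilinear machinery of Foschi \cite{F}, together with a parity splitting of $f$ under $\zeta\mapsto-\zeta$ and dimension-specific closed-form expressions for $\sigma^{*k}$. The upper cutoff $d\leq 7$ in (a) and the switch to $q\geq 4$ starting at $d=8$ in (b) mark the precise range in which this bilinear closure remains effective; for case (c) the larger value $k\geq 3$ provides enough smoothing in the kernel $\sigma^{*k}$ that the scalar problem becomes tractable via a direct Fourier-analytic combination with the weighted inequality. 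The final rigidity step, ensuring that the only functions saturating \emph{all} the inequalities simultaneously are the exponentials \eqref{CV_ext}, is the most delicate part of the extremizer analysis and will require a unique-continuation argument on the sphere.
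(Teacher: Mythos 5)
Your reduction of the sharp constant to the scalar inequality $((g\sigma)\ast\sigma)^{\ast k}(0)\le \omega_{d-1}^{-k}\big(\int g\,d\sigma\big)^{k}\,\sigma^{\ast(2k)}(0)$ is where the argument breaks. First, this is the $q=2$ statement (its right-hand side is $\omega_{d-1}^{-k}\sigma^{\ast(2k)}(0)\,\|f\|_{L^2}^{2k}$), whereas in cases (b) and (c) the minimal exponent you yourself announced is $q=2k$, for which the needed bound carries $\int g^{k}\,d\sigma=\|f\|_{L^{2k}}^{2k}$ on the right; your proposal offers no route to that. Second, for $k=2$ the scalar inequality as written is actually \emph{false}: the weight $\sigma\ast\sigma(\xi)=c_d\,|\xi|^{-1}(4-|\xi|^2)_+^{(d-3)/2}$ is singular at the origin, and if $g$ concentrates equal masses near a pair of antipodal points then $\int(\sigma\ast\sigma)\,(g\sigma\ast g\sigma)$ blows up while $\big(\int g\,d\sigma\big)^2$ stays fixed — so the intermediate quantity in your chain cannot be dominated by the claimed right-hand side, even though the final restriction inequality is true for $3\le d\le 7$. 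This is precisely why the paper does not argue through Corollary \ref{Cor3} in case (a): Foschi's geometric identity $|\zeta_1+\zeta_2||\zeta_3+\zeta_4|+|\zeta_1+\zeta_3||\zeta_2+\zeta_4|+|\zeta_1+\zeta_4||\zeta_2+\zeta_3|=4$ is used to trade the singular weight for the bounded weight $|\zeta_1+\zeta_2|\big(4-|\zeta_1+\zeta_2|^2\big)^{(d-3)/2}$, and only then does the Funk--Hecke spectral analysis (after reducing to nonnegative and antipodally symmetric $f$) close the $q=2$ case; the eigenvalue computation is also what produces the cutoff $d\le 7$, since $\Lambda_2(\phi_d)>0$ for $d\ge 8$. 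You cite Foschi's machinery for (a), but your stated reduction does not feed into it.

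For cases (b) and (c) the missing idea is the symmetrization over $SO(d)$: the paper rewrites the right-hand side of the weighted multilinear inequality as an average over rotations and applies H\"older in the rotation variable, using $\int_{SO(d)}\prod_{j}|f(R\zeta_j)|^{2}\,d\mu(R)\le\prod_j\big(\int_{SO(d)}|f(R\zeta_j)|^{2k}d\mu(R)\big)^{1/k}=\omega_{d-1}^{-1}\|f\|_{L^{2k}}^{2k}$, which yields \eqref{Intro_sharp_constant} at $q=2k$ with $\sigma^{(2k)}(0)$ emerging from integrating the kernel. Your substitute (``enough smoothing for $k\ge3$'', ``direct Fourier-analytic combination'') would require proving the $q=2$ scalar inequality in those ranges, which is strictly stronger than Theorem \ref{Thm1} and is not established (and, for $d\ge 8$, $k=2$, is exactly what the spectral method cannot reach). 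Finally, on extremizers: equality in the Cauchy--Schwarz step of the weighted inequality only forces $f_j(\zeta)=c_j e^{\nu\cdot\zeta}$ with $\nu\in\C^{d}$ possibly genuinely complex; the purely imaginary $\nu$ in \eqref{CV_ext} comes from the \emph{additional} equality conditions (equality in the $SO(d)$--H\"older step in (b),(c), or constancy of $|f|$ in (a)), and the ``rigidity by tangential variation'' you invoke is itself the substance of Theorem \ref{Sec3_Char_ext} (the Cauchy--Pexider functional equation on the sphere, extending Christ--Shao and Charalambides, including the removal of the nonvanishing hypothesis), not a routine unique-continuation remark.
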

By Plancherel's theorem we have
\begin{equation}\label{Intro_Plancherel}
\| \widehat{\sigma}\|_{L^{2k}(\R^{d})} = (2\pi)^{d/2k}\,\|\sigma * \sigma * \ldots* \sigma\|_{L^{2}(\R^{d})}^{1/k},
\end{equation}
where the convolution on the right-hand side is $k$-fold. We remark that, in principle, the $k$-fold convolution of the surface measure 
$$\sigma_{d-1}^{(k)} = \sigma_{d-1} *  \ldots* \sigma_{d-1}$$ 
is a finite measure on $\R^d$ supported on the ball $\ov{B(k)}$. For $k \geq 2$, the measure $\sigma_{d-1}^{(k)}$ and the Lebesgue measure are mutually absolutely continuous on $\ov{B(k)}$ (see \cite[Eq. (2.7)]{Ch}), and we make the identification of $\sigma_{d-1}^{(k)}$ with its Radon-Nikodym derivative, which is a radial function. When $k=2$, the value of $\sigma_{d-1} * \sigma_{d-1}$ was explicitly computed in \cite[Proposition A.5]{R} as
\begin{equation}\label{Intro_conv_2}
\sigma_{d-1}\ast\sigma_{d-1}(\xi)=2^{-d+3}\, \omega_{d-2}\,  \frac{1}{|\xi|}(4-|\xi|^2)_+^{\frac{d-3}{2}},
\end{equation}
for $\xi \in \R^d$, where $x_+ = \max\{0,x\}$. We provide an alternative proof of \eqref{Intro_conv_2} in Lemma \ref{biconvsigma} below. 

\medskip

Using \eqref{Intro_area_sphere}, \eqref{Intro_Plancherel}, \eqref{Intro_conv_2} and the identity 
\begin{equation}\label{A2}
\int_0^1 t^{w-1} \,(1-t)^{z-1}\,\dt = \frac{\Gamma(w) \Gamma(z)}{\Gamma(w+z)},
\end{equation}
valid for $w,z \in \C$ with $\Re(w), \Re(z) >0$, we may simplify \eqref{Intro_sharp_constant} in the case $k=2$ to 
\begin{equation*}
C(d,4,q) = \omega_{d-1}^{-1/q + 1/4}\,\omega_{d-2}^{1/2}\,\,(2\pi)^{d/4}\,\,2^{(d-3)/4} \left[\frac{\Gamma(d-2)\,\Gamma\left(\tfrac{d-2}{2}\right)}{\Gamma\left(\tfrac{3(d-2)}{2}\right)}\right]^{1/4}.
\end{equation*}
When $k\geq 3$, we may express the value of the sharp constant in \eqref{Intro_sharp_constant} in terms of integrals involving the Bessel function of the first kind $J_v$ defined by 
$$J_v(z) =  \sum_{n=0}^{\infty} \frac{(-1)^n \big(\tfrac12 z\big)^{2n + v}}{n!\, \Gamma(v+n+1)},$$
for $v > -1$ and $\Re(z) >0$. In fact, the Fourier transform of $\sigma_{d-1}$ is given by 
\begin{equation}\label{Intro_FT_sigma}
\widehat{\sigma}_{d-1}(x)=(2\pi)^{d/2}|x|^{(2-d)/2} J_{(d-2)/2}(|x|).
\end{equation}
If we invert the $k$-th power of this Fourier transform, we find an expression for the $k$-fold convolution of the surface measure
\begin{align}\label{Intro_conv_k}
\begin{split}
\sigma_{d-1}^{(k)}(\xi) & =(2\pi)^{-d} \, (2\pi)^{dk/2} \int_{\R^d} e^{i x \cdot \xi}\,|x|^{(2-d)k/2} J_{(d-2)/2}(|x|)^k\,\dx
\end{split}
\end{align}
for $|\xi| \leq k$, provided this integral converges absolutely. 

\smallskip

Our strategy to prove Theorem \ref{Thm1} has several distinct components. Firstly, as far as the sharp inequality is concerned, we follow the outline of Christ-Shao \cite{CS, CS2} and Foschi \cite{F} (which corresponds to the case $d=3$) to prove part (a), using a spectral decomposition in spherical harmonics and the Funk-Hecke formula. We are able to extend their method up to dimension $d=7$. In order to prove parts (b) and (c) we take a different path, using a sharp multilinear weighted inequality related to the $k$-fold convolution of the surface measure (Theorem \ref{Thm2}) together with a symmetrization process over the group of rotations $SO(d)$. Secondly, as far as the characterization of the complex-valued extremizers is concerned, our main tool is a complete characterization of the solutions of the Cauchy-Pexider functional equation for sumsets of the sphere given by Theorem  \ref{Sec3_Char_ext}. This builds upon previous work by  Christ-Shao \cite{CS2} and Charalambides \cite{Ch}.

\medskip

Our second result is the following multilinear weighted adjoint restriction inequality.

\begin{theorem}\label{Thm2}
Let $d,k \geq 2$ and $f_j \in L^1(\S^{d-1})$, $f_j \neq 0$, $j =1,2,\ldots, k$. Then we have
\begin{equation}\label{Intro_weighted}
\left\|\prod_{j=1}^k \widehat{f_j\sigma}\right\|_{L^{2}(\R^d)}^2 \leq (2\pi)^{d} \int_{(\S^{d-1})^k} \sigma^{(k)}(\zeta_1 + \zeta_2 + \ldots+ \zeta_k) \,\left(\prod_{j=1}^k |f_j(\zeta_j)|^2\right)\, \dsigma(\zeta_1)\,\dsigma(\zeta_2)\, \ldots\, \dsigma(\zeta_k). 
\end{equation}
If $(d,k) \neq (2,2)$ and the right-hand side of \eqref{Intro_weighted} is finite, we have equality if and only if 
\begin{equation}\label{Intro_extremizers}
f_j(\zeta) = c_j\,e^{\nu \cdot \zeta}
\end{equation}
for $j = 1,2,  \ldots, k$, where $c_1, c_2, \ldots,c_k \in \C \setminus\{0\}$ and $\nu \in \C^d$.
\end{theorem}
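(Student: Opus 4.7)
The plan is to pass through Plancherel to a convolution and then apply a weighted Cauchy--Schwarz. Set $F_j := f_j\sigma$ and $G := F_1 \ast \cdots \ast F_k$; since $\sigma^{(k)}$ is absolutely continuous with respect to Lebesgue measure for $k \geq 2$, the finite measure $G$ admits a Radon--Nikodym density, which will lie in $L^2(\R^d)$ precisely when the right-hand side of \eqref{Intro_weighted} is finite. Plancherel's theorem gives
\begin{equation*}
\Big\|\prod_{j=1}^k \widehat{f_j\sigma}\Big\|_{L^2(\R^d)}^2 = (2\pi)^d\,\|G\|_{L^2(\R^d)}^2,
\end{equation*}
while the defining property of the convolution $G$, paired against the test function $\overline{G}$, yields the representation
\begin{equation*}
\|G\|_2^2 = \int_{(\S^{d-1})^k} \overline{G(\zeta_1+\cdots+\zeta_k)}\prod_{j=1}^k f_j(\zeta_j)\,\dsigma(\zeta_j).
\end{equation*}

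Next, I factor the integrand as
\begin{equation*}
\frac{\overline{G(\zeta_1+\cdots+\zeta_k)}}{\sqrt{\sigma^{(k)}(\zeta_1+\cdots+\zeta_k)}} \cdot \sqrt{\sigma^{(k)}(\zeta_1+\cdots+\zeta_k)}\,\prod_{j=1}^k f_j(\zeta_j),
\end{equation*}
and apply Cauchy--Schwarz in $L^2\bigl(\prod_j \dsigma(\zeta_j)\bigr)$. The key structural input is that the pushforward of $\prod_j \dsigma(\zeta_j)$ under the summation map $(\zeta_1,\ldots,\zeta_k) \mapsto \zeta_1+\cdots+\zeta_k$ is exactly $\sigma^{(k)}(x)\,\dx$ on $\overline{B(k)}$. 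Consequently the first Cauchy--Schwarz factor collapses to $\int|G(x)|^2\dx = \|G\|_2^2$, while the second is precisely $(2\pi)^{-d}$ times the right-hand side of \eqref{Intro_weighted}. Rearranging $\|G\|_2^4 \leq \|G\|_2^2 \cdot (\mathrm{RHS})/(2\pi)^d$ and combining with Plancherel yields the stated inequality.

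For the characterization of equality, the Cauchy--Schwarz equality condition forces the two factors above to be proportional a.e.\ on $(\S^{d-1})^k$, which --- after dividing by $\sigma^{(k)}(\zeta_1+\cdots+\zeta_k)$, strictly positive on the interior of $B(k)$ --- rearranges to the Cauchy--Pexider-type functional equation
\begin{equation*}
H(\zeta_1+\cdots+\zeta_k) = c\prod_{j=1}^k f_j(\zeta_j), \qquad \zeta_j \in \S^{d-1},
\end{equation*}
where $H(x) := G(x)/\sigma^{(k)}(x)$ and $c \in \C$. At this point I invoke Theorem \ref{Sec3_Char_ext}, the classification of solutions of such equations on sumsets of the sphere: provided $(d,k)\neq(2,2)$, each $f_j$ is forced to take the exponential form $c_j\,e^{\nu\cdot\zeta}$ with a common $\nu \in \C^d$. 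The converse is immediate, since $\prod_j c_j\,e^{\nu\cdot\zeta_j} = \bigl(\prod_j c_j\bigr)\,e^{\nu\cdot(\zeta_1+\cdots+\zeta_k)}$ depends only on $\sum_j \zeta_j$, making the ratio constant and saturating the Cauchy--Schwarz step.

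The main obstacle is the equality analysis rather than the inequality itself: one must verify that $G$ does not vanish on a subset of positive measure inside $\supp \sigma^{(k)}$, handle measurability of $H$, and confirm that the hypotheses of Theorem \ref{Sec3_Char_ext} are satisfied. The exclusion $(d,k)\neq(2,2)$ reflects the fact that on $\S^1 \times \S^1$ the sum map has zero-dimensional generic fibers, so the Cauchy--Pexider equation loses the rigidity needed to force exponential solutions --- a phenomenon captured in the hypotheses of Theorem \ref{Sec3_Char_ext}.
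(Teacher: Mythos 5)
Your route is essentially the paper's, repackaged. The paper applies Cauchy--Schwarz fiberwise, with respect to the singular measure $\delta_d(\xi-\zeta_1-\cdots-\zeta_k)\,\dsigma(\zeta_1)\cdots\dsigma(\zeta_k)$ on each fiber of the sum map, getting the pointwise bound $|f_1\sigma*\cdots*f_k\sigma(\xi)|^2\le\sigma^{(k)}(\xi)\,\big(|f_1|^2\sigma*\cdots*|f_k|^2\sigma\big)(\xi)$, and then integrates and uses Plancherel; you apply one global weighted Cauchy--Schwarz on $(\S^{d-1})^k$ after pairing $G=f_1\sigma*\cdots*f_k\sigma$ against itself, with the pushforward identity (which is exactly the identification of $\sigma^{(k)}$ with the image of $\sigma^k$ under the sum map) collapsing the first factor to $\|G\|_2$. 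Same ingredients, same resulting functional equation, same appeal to Theorem \ref{Sec3_Char_ext}; your single proportionality constant plays the role of the paper's fiberwise quotient $h(\xi)=(f_1\sigma*\cdots*f_k\sigma)(\xi)/\sigma^{(k)}(\xi)$.

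One step is circular as written: pairing against the test function $\overline{G}$ presupposes $G\in L^2(\R^d)$, and your justification (``$G\in L^2$ precisely when the right-hand side of \eqref{Intro_weighted} is finite'') is, via Plancherel, exactly the inequality being proved. The fix is cheap: run your weighted Cauchy--Schwarz against an arbitrary $\phi\in L^2(\R^d)$ --- the first factor becomes $\|\phi\|_2$ by the same pushforward identity, since $\int|\phi(\zeta_1+\cdots+\zeta_k)|^2\,\sigma^{(k)}(\zeta_1+\cdots+\zeta_k)^{-1}\,\dsigma(\zeta_1)\cdots\dsigma(\zeta_k)=\|\phi\|_2^2$ --- and duality then gives $\|G\|_2^2\le(2\pi)^{-d}$ times the right-hand side; alternatively use the paper's pointwise Cauchy--Schwarz, which yields $G\in L^2$ directly whenever the right-hand side is finite. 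Once the inequality is established, pairing against $\overline{G}$ in the equality analysis is legitimate. Two smaller points: before invoking Theorem \ref{Sec3_Char_ext} you must rule out a vanishing proportionality constant (if $c=0$ then $G\equiv 0$, so equality forces the right-hand side to vanish and hence some $f_j\equiv 0$, excluded by hypothesis), since the theorem needs the equation in the form $\prod_j f_j(\zeta_j)=h(\zeta_1+\cdots+\zeta_k)$ with $h$ measurable; and in the converse direction the finiteness of the right-hand side for the exponential extremizers is not automatic but follows, as the paper notes, from $\int_{(\S^{d-1})^k}\sigma^{(k)}(\zeta_1+\cdots+\zeta_k)\,\dsigma(\zeta_1)\cdots\dsigma(\zeta_k)=\sigma^{(2k)}(0)<\infty$ when $(d,k)\neq(2,2)$.
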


\medskip

Using \eqref{Intro_conv_2}, we may specialize the inequality \eqref{Intro_weighted} to the case $k=2$ to obtain the following corollary.

\medskip

\begin{corollary}\label{Cor3}
Let $d \geq 2$ and $f_1,f_2 \in  L^1(\S^{d-1})$ with $f_1,f_2 \neq 0$. Then we have
\begin{equation}\label{Intro_weighted_case2}
\left\| \widehat{f_1\sigma} \, \widehat{f_2\sigma}\right\|_{L^{2}(\R^d)}^2 \leq (2\pi)^{d}\,2^{(-d+2)/2} \,\omega_{d-2} \int_{(\S^{d-1})^2} \!\left[\frac{(1 - \zeta_1\cdot \zeta_2)^{d-3}}{(1 + \zeta_1 \cdot \zeta_2)}\right]^{1/2}\!|f_1(\zeta_1)|^2\,|f_2(\zeta_2)|^2\, \dsigma(\zeta_1)\,\dsigma(\zeta_2). 
\end{equation}
If $ d \geq 3$ and the right-hand side of \eqref{Intro_weighted_case2} is finite, we have equality if and only if 
\begin{equation*}
f_1(\zeta) = c_1\,e^{\nu \cdot \zeta}\ \ \ {\textrm and} \ \ \ f_2(\zeta) = c_2\,e^{\nu \cdot \zeta}\,,
\end{equation*}
where $c_1, c_2 \in \C\setminus\{0\}$ and $\nu \in \C^d$.
\end{corollary}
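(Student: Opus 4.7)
The plan is to deduce Corollary \ref{Cor3} as a direct specialization of Theorem \ref{Thm2} to the case $k=2$, by substituting the closed form \eqref{Intro_conv_2} for $\sigma_{d-1}\ast\sigma_{d-1}$ evaluated at the sum of two unit vectors. The only real work is a short algebraic simplification.

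First I would observe that, for $\zeta_1,\zeta_2\in\S^{d-1}$, one has
\[
|\zeta_1+\zeta_2|^2 = 2(1+\zeta_1\cdot\zeta_2) \qquad \text{and} \qquad 4 - |\zeta_1+\zeta_2|^2 = 2(1-\zeta_1\cdot\zeta_2),
\]
and both quantities are non-negative on $\S^{d-1}\times\S^{d-1}$, so the positive-part bracket in \eqref{Intro_conv_2} can be dropped. Plugging $\xi = \zeta_1+\zeta_2$ into \eqref{Intro_conv_2} then yields
\[
\sigma_{d-1}\ast\sigma_{d-1}(\zeta_1+\zeta_2) = 2^{-d+3}\,\omega_{d-2}\,\frac{[2(1-\zeta_1\cdot\zeta_2)]^{(d-3)/2}}{[2(1+\zeta_1\cdot\zeta_2)]^{1/2}},
\]
and collecting the powers of $2$ (namely $-d+3 + (d-3)/2 - 1/2 = (-d+2)/2$) gives
\[
\sigma_{d-1}\ast\sigma_{d-1}(\zeta_1+\zeta_2) = 2^{(-d+2)/2}\,\omega_{d-2}\,\left[\frac{(1-\zeta_1\cdot\zeta_2)^{d-3}}{1+\zeta_1\cdot\zeta_2}\right]^{1/2}.
\]
Substituting this weight into \eqref{Intro_weighted} with $k=2$ produces exactly \eqref{Intro_weighted_case2}.

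For the equality statement, the hypothesis $(d,k)\neq(2,2)$ in Theorem \ref{Thm2} translates, in this specialization, to $d\geq 3$, which matches the hypothesis of the corollary. The extremizer shape \eqref{Intro_extremizers} for $k=2$ is precisely $f_1(\zeta) = c_1 e^{\nu\cdot\zeta}$ and $f_2(\zeta) = c_2 e^{\nu\cdot\zeta}$, so the conclusion follows verbatim. The only subtlety worth flagging is that the weight in \eqref{Intro_weighted_case2} develops a singularity on the antipodal diagonal $\zeta_2 = -\zeta_1$, but this is a set of product measure zero, so the identification of the integrands is unambiguous almost everywhere and the finiteness hypothesis on the right-hand side is preserved under the substitution.

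There is essentially no obstacle here beyond bookkeeping the exponents of $2$ correctly; the corollary is a direct translation of Theorem \ref{Thm2} through the explicit formula \eqref{Intro_conv_2}, and no further analytic input is required.
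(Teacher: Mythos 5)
Your proposal is correct and is precisely the paper's own argument: the corollary is obtained by plugging $\xi=\zeta_1+\zeta_2$ into the explicit formula \eqref{Intro_conv_2}, simplifying the powers of $2$ exactly as you do, and then quoting Theorem \ref{Thm2} with $k=2$, where the hypothesis $(d,k)\neq(2,2)$ becomes $d\geq 3$ for the equality characterization. Nothing further is needed.
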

In the case $d=2$, a version of the inequality \eqref{Intro_weighted_case2} can be found in the work of Foschi and Klainerman \cite[Example 17.5]{FK}, where it is described as ``an interesting formula''.
In the case $d=3$, the weighted inequality \eqref{Intro_weighted_case2} already appears in the work of Foschi \cite[Lemma 4.1]{F}. A novel feature here is the complete characterization of the extremizers \eqref{Intro_extremizers}. The next result is a key tool to characterize the extremizers in Theorems \ref{Thm1} and \ref{Thm2}.

\begin{theorem} \label{Sec3_Char_ext}
Let $d \geq 2$, $k \geq 2$ and $(d,k) \neq (2,2)$. Let $f_j: \S^{d-1} \to \C$ $(1 \leq j \leq k)$ and $h:\ov{B(k)} \to \C$ be measurable functions such that 
\begin{equation} \label{Sec3_Functional_Equation}
f_1(\zeta_1)f_2(\zeta_2) \ldots f_k(\zeta_k) = h(\zeta_1 + \zeta_2 +  \ldots+ \zeta_k)
\end{equation}
for $\sigma^k-$a.e. $(\zeta_1, \zeta_2,  \ldots, \zeta_k) \in (\S^{d-1})^k$. Then one the following holds:
\begin{itemize}
\item[(i)] There exist $c_1, c_2, \ldots,c_k \in \C \setminus\{0\}$ and $\nu \in \C^d$ such that 

\begin{equation*}
f_j(\zeta) = c_j\,e^{\nu \cdot \zeta}
\end{equation*}
for $\sigma-$a.e. $\zeta \in \S^{d-1}$, $j=1,2,\ldots, k$.

\smallskip

\item[(ii)] $f_j(\zeta) = 0$ for $\sigma-$a.e. $\zeta \in \S^{d-1}$, for some $j$ with $1 \leq j \leq k$.
\end{itemize}
\end{theorem}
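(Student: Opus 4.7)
The plan is to reduce the multivariable Cauchy--Pexider equation \eqref{Sec3_Functional_Equation} to a scalar one and then exploit sum-preserving symmetries of the sphere to pin down each $f_j$ as an affine exponential. I will proceed in three stages: rule out partial vanishing (so that either alternative (ii) holds or every $f_j$ is nonzero a.e.); use a coordinate-swap symmetry to reduce to a single-function equation; and exploit positive-dimensional sum-preserving perturbations to force the exponential form.

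For the first stage, suppose some $f_{j_0}$ vanishes on a measurable set $E \subset \S^{d-1}$ of positive measure. Then \eqref{Sec3_Functional_Equation} forces $h$ to vanish on the image of $E \times \prod_{j\neq j_0}\mathrm{supp}(f_j)$ under the sum-map $S:(\S^{d-1})^k\to\R^d$, a set of positive Lebesgue measure in $\ov{B(k)}$ (the hypothesis $(d,k)\neq(2,2)$ ensures $S$ has positive-dimensional generic fibers, so such sumsets have nonempty interior). Matching this against tuples with $\zeta_{j_0}\notin E$ landing in the same target region yields a positive-measure set where the left side of \eqref{Sec3_Functional_Equation} is nonzero but the right side vanishes, a contradiction unless some $f_j\equiv 0$ a.e. Assuming henceforth that all $f_j$ are nonzero a.e., the second stage uses that applying \eqref{Sec3_Functional_Equation} to two tuples differing only by a swap $\zeta_i\leftrightarrow\zeta_j$ preserves the right-hand side, so $f_i(\zeta)f_j(\zeta') = f_i(\zeta')f_j(\zeta)$ a.e., whence $f_i/f_j$ is a.e.\ constant. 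Absorbing these constants produces a common function $G$ with $f_j = c_j G$ and reduces \eqref{Sec3_Functional_Equation} to
\begin{equation*}
G(\zeta_1)\,G(\zeta_2)\,\cdots\,G(\zeta_k) = \wh h(\zeta_1 + \zeta_2 + \cdots + \zeta_k).
\end{equation*}

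For the third stage, the core task is to verify that whenever $(\zeta_1,\ldots,\zeta_k),(\zeta_1',\ldots,\zeta_k') \in (\S^{d-1})^k$ satisfy $\sum_j\zeta_j = \sum_j\zeta_j'$ one has $\prod_j G(\zeta_j) = \prod_j G(\zeta_j')$, and to upgrade this sum-invariance into an exponential characterization. I would first regularize by convolving the reduced equation in the free variables $\zeta_2,\ldots,\zeta_k$ against a smooth compactly supported bump on an open subset of $(\S^{d-1})^{k-1}$, producing a smooth version of $G$ that agrees with $G$ a.e. Differentiating the reduced equation along the positive-dimensional fibers of $S$ then yields an additive Cauchy equation for $\log G$ on a simply connected open piece where $G$ is continuous and nonzero, and the analysis of such sum-preserving constraints on $\S^{d-1}$ -- as developed by Charalambides \cite{Ch} and Christ--Shao \cite{CS2} -- forces $\log G$ to coincide with the restriction to $\S^{d-1}$ of an affine $\C$-linear functional on $\R^d$. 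This yields $G(\zeta) = c\,e^{\nu\cdot\zeta}$.

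The hard part is this third step: producing a globally single-valued continuous branch of $\log G$ in the complex-valued setting and patching the local exponential representations into a consistent global one. The hypothesis $(d,k)\neq(2,2)$ is used decisively here, for it is precisely the condition under which the generic fiber of $S$ is both positive-dimensional and connected, providing enough sum-preserving perturbations to rigidify the branch choice and propagate the exponential structure across $\S^{d-1}$. In the excluded case $d=k=2$ the fibers consist only of the coordinate swap and yield no information beyond the second stage; indeed non-exponential solutions exist there.
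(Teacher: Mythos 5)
The decisive gap is in your first stage. You claim that if some $f_{j_0}$ vanishes on a set $E\subset\S^{d-1}$ of positive measure while no $f_j$ vanishes identically, then the region where $h$ must vanish (the image under the sum map of $E\times\prod_{j\neq j_0}\{f_j\neq 0\}$) meets the region where $h$ must be nonzero in a set of positive Lebesgue measure. This does not follow as stated: the two regions are images of \emph{different} product sets and need not overlap at all---for instance, if $E$ and the essential supports of the other $f_j$ are small spherical caps, the corresponding sumsets can be disjoint---and even when the sumsets do overlap, a contradiction requires that for a positive measure set of $\xi$ the fiber of the sum map over $\xi$ charges both $E\times(\cdots)$ and $E^c\times(\cdots)$ with positive fiber measure, which is precisely the kind of structural information the theorem is supposed to deliver. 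Removing the hypothesis $\sigma\big(f_j^{-1}(\{0\})\big)=0$ from Charalambides' theorems is the actual content of this section of the paper, and it is accomplished in the opposite order from your plan: one first derives a local functional equation for $h$ alone, $h(x)h(y)=H_\xi(x+y)$ on small balls (Lemma \ref{Lem7}), shows that on each such ball $h$ is either a nowhere-vanishing exponential or a.e. zero, globalizes by a connectedness argument on $B(2)\setminus\{0\}$, and only then plays the now-known global form of $h$ against a hypothetical positive-measure zero set of some $f_j$ (using that $\sigma\ast\sigma$ and Lebesgue measure are mutually absolutely continuous) to rule out partial vanishing. Your stage 1 asserts the conclusion of that comparison without the global description of $h$ that makes it work.

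Stage 3 also rests on steps that are not available as written. Convolving the reduced equation in the free variables does not produce a smooth, or even continuous, representative of $G$, since $h$ is merely measurable and need not be locally integrable; the rigorous versions of this regularization (in \cite{Ch}, and in the paper's own treatment of $d=2$, $k=3$) first use Lusin-type truncations before any averaging or differentiation. More importantly, the results of \cite{Ch} and \cite{CS2} that you invoke for the exponential characterization are exactly those quoted in Lemma \ref{Lem6_Ch_result}: they presuppose the non-vanishing property that your stage 1 was supposed to establish, and they cover only $k=2$, $d\geq 3$ and $k=3$, $d\geq 2$, so for general $k$ one still needs the induction obtained by freezing $k-2$ of the variables, which your outline omits. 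Your stage 2 observation---that swapping two coordinates preserves the sum and hence forces $f_i/f_j$ to be a.e. constant, so $f_j=c_jG$---is correct once all $f_j$ are known to be nonzero a.e., and it is a genuinely different (and pleasant) reduction not used in the paper; but it does not by itself close either of the gaps above.
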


In the spirit of Theorem \ref{Thm2}, similar multilinear weighted adjoint restriction inequalities were obtained for the paraboloid \cite{C} and cone \cite{BR}, in connection with sharp Strichartz and Sobolev-Strichartz estimates for the Sch\"{o}dinger and wave equations, respectively (in the context of the wave equation, see also \cite{KM1, KM2, KM3} for related inequalities with different `null' weights). In retrospect, the works of Kunze \cite{K},  Foschi \cite{F2} and Hundertmark-Zharnitsky \cite{HZ} were the pioneers on the existence and classification of extremizers for adjoint restriction inequalities (over the paraboloid and cone) in low dimensions. This line of research flourished and these papers were followed by a pool of very interesting works in the interface of extremal analysis and differential equations, see for instance \cite{BBCH, B, FVV, FVV2, HS, OS, OR, Q, Ra, Sh, Sh2}, in addition to the ones previously cited in this introduction.


\section{Proof of Theorem \ref{Thm2}}

\subsection{Preliminaries} Let $k \geq 2$ and $f_j \in L^1(\S^{d-1})$, $j =1,2,\ldots, k$. The convolution $(f_1\sigma * f_2\sigma * \ldots* f_k\sigma)$ is a finite measure defined on the Borel subsets $E\subset \R^d$ by 
\begin{equation*}
(f_1\sigma * f_2\sigma * \ldots* f_k\sigma)(E) = \int_{(\S^{d-1})^k} \chi_E(\zeta_1 + \zeta_2 + \ldots+ \zeta_k)\, \left(\prod_{j=1}^k\,f_j(\zeta_j)\right)\dsigma(\zeta_1)\,\dsigma(\zeta_2) \, \ldots \, \dsigma(\zeta_k).
\end{equation*}
It is then clear that this measure is supported on $\ov{B(k)}$. Since $f_j \in L^1(\S^{d-1})$, the measure  $(f_1\sigma * f_2\sigma * \ldots* f_k\sigma)$ is absolutely continuous with respect to  $\sigma^{(k)}$, and therefore it is absolutely continuous with respect to the Lebesgue measure on $\R^d$. We identify $(f_1\sigma * f_2\sigma * \ldots* f_k\sigma)$ with its Radon-Nikodym derivative with respect to the Lebesgue measure, writing it in the following way (see for instance \cite{F} or \cite[Remark 3.1]{FK})
\begin{equation}\label{Sec2_delta_form}
(f_1\sigma * f_2\sigma * \ldots* f_k\sigma)(\xi) = \int_{(\S^{d-1})^k} \delta_d(\xi - \zeta_1 - \zeta_2 - \ldots - \zeta_k)\, \left(\prod_{j=1}^k\,f_j(\zeta_j)\right)\dsigma(\zeta_1)\,\dsigma(\zeta_2)\, \ldots \, \dsigma(\zeta_k),
\end{equation}
where $\delta_d$ denotes the $d$-dimensional Dirac delta distribution. The alternative expression \eqref{Sec2_delta_form} is particularly useful in some computations, as exemplified by the next result.

\begin{lemma}\label{biconvsigma} 
Let $d\geq 2$. The surface measure $\sigma_{d-1}$ on $\S^{d-1}$ satisfies
$$\sigma_{d-1}\ast\sigma_{d-1}(\xi)=2^{-d+3}\,\omega_{d-2}\,\frac{1}{|\xi|}(4-|\xi|^2)_+^{\frac{d-3}{2}}$$
for $\xi \in \R^d$, where $x_+ = \max\{0,x\}$.
\end{lemma}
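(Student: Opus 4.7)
The plan is to exploit rotational symmetry to reduce the two-variable sphere integral to a one-dimensional integral, and then read off the density by testing against radial functions. Since $\sigma_{d-1}\ast\sigma_{d-1}$ is manifestly rotation invariant, it suffices to show that for every nonnegative radial $\varphi\in C_c(\R^d)$,
\est{
\int_{(\S^{d-1})^2}\varphi(\zeta_1+\zeta_2)\,\d\sigma(\zeta_1)\,\d\sigma(\zeta_2)=\omega_{d-1}\int_0^\infty \varphi(r)\,g(r)\,r^{d-1}\,\dd r,
}
where $g(r)=2^{-d+3}\,\omega_{d-2}\,r^{-1}(4-r^2)_+^{(d-3)/2}$.

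First, I would fix $\zeta_2$ and, using the rotation invariance of $\sigma$, assume $\zeta_2=e_d$. Then $\varphi(\zeta_1+e_d)$ depends only on $u:=|\zeta_1+e_d|=\sqrt{2+2\zeta_1\cdot e_d}$. Parametrizing $\zeta_1=(\sin\theta\,\omega,\cos\theta)$ with $\omega\in\S^{d-2}$ and $\theta\in[0,\pi]$, the inner sphere integral becomes
\est{
\int_{\S^{d-1}}\varphi(\zeta_1+e_d)\,\d\sigma(\zeta_1)=\omega_{d-2}\int_0^\pi \varphi\bigl(\sqrt{2+2\cos\theta}\bigr)\sin^{d-2}\theta\,\dd\theta.
}

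Next, I would change variables $u=\sqrt{2+2\cos\theta}\in[0,2]$. A direct computation gives $\sin\theta=\tfrac{u}{2}\sqrt{4-u^2}$ and $\dd\theta=-2(4-u^2)^{-1/2}\du$, so the factor $\sin^{d-2}\theta\,\dd\theta$ becomes $2^{-d+3}\,u^{d-2}(4-u^2)^{(d-3)/2}\,\du$ (with orientation absorbed). Integrating the outer variable $\zeta_2$ contributes the factor $\omega_{d-1}$, yielding
\est{
\int_{(\S^{d-1})^2}\varphi(\zeta_1+\zeta_2)\,\d\sigma(\zeta_1)\,\d\sigma(\zeta_2)=\omega_{d-1}\,\omega_{d-2}\,2^{-d+3}\int_0^2\varphi(u)\,u^{d-2}(4-u^2)^{(d-3)/2}\,\du.
}

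Finally, writing the left-hand side via the Radon–Nikodym derivative $g(|\xi|)$ of $\sigma\ast\sigma$ gives $\omega_{d-1}\int_0^\infty \varphi(r)g(r)r^{d-1}\,\dd r$, and comparison of the two expressions forces $g(r)=2^{-d+3}\,\omega_{d-2}\,r^{-1}(4-r^2)_+^{(d-3)/2}$, as claimed. The only technical step requiring care is the change of variables, in particular tracking the powers of $(4-u^2)$ and the factor $2^{-d+3}$; everything else is a routine consequence of Fubini and the spherical parametrization. Note that for $d=2$ the density is distributional, but the computation still holds interpreted in the sense of radial measures (and the formula $(4-|\xi|^2)_+^{-1/2}/|\xi|$ is the classical density of $\sigma_1\ast\sigma_1$).
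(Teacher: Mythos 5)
Your argument is correct, and it takes a genuinely different (though closely related) route from the paper. The paper computes $\sigma\ast\sigma(\xi)$ directly via the formal Dirac-delta calculus: it writes $\dsigma(\xi)=\delta(1-|\xi|)\,\dxi=2\,\delta(1-|\xi|^2)\,\dxi$, reduces $\sigma\ast\sigma(\xi)$ to $\tfrac{2}{|\xi|}\int_{\S^{d-1}}\delta\big(2\tfrac{\xi}{|\xi|}\cdot\nu-|\xi|\big)\dsigma(\nu)$, and then evaluates a one-dimensional delta integral after passing to polar coordinates on the sphere. You instead work in the weak formulation: test the measure $\sigma\ast\sigma$ against radial $\varphi\in C_c$, use rotation invariance to fix $\zeta_2=e_d$, parametrize the sphere, and carry out the explicit substitution $u=\sqrt{2+2\cos\theta}$ (your Jacobian bookkeeping, $\sin\theta=\tfrac{u}{2}\sqrt{4-u^2}$, $\dd\theta=-2(4-u^2)^{-1/2}\du$, and the resulting factor $2^{-d+3}u^{d-2}(4-u^2)^{(d-3)/2}$, is correct), then identify the radial density by comparison; note that a rotation-invariant measure is indeed determined by its action on radial test functions, so this identification is legitimate. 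What your approach buys is rigor and self-containedness --- it avoids justifying the delta manipulations and, as a by-product, re-proves the absolute continuity of $\sigma\ast\sigma$ (which the paper imports from Charalambides); what the paper's approach buys is brevity and consistency with the delta-function notation \eqref{Sec2_delta_form} used throughout its later arguments. One small quibble: for $d=2$ the resulting density is not ``distributional'' --- it is a genuine $L^1$ radial density, merely unbounded at $|\xi|=0$ and $|\xi|=2$ --- so no separate interpretation is needed there.
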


\begin{proof}
Let $\sigma:=\sigma_{d-1}$. Following \cite{F}, the surface measure on the sphere may be written as 
\begin{equation}\label{Sec2_sigma_delta}
\dsigma(\xi) = \delta(1 - |\xi|)\,\dxi = 2\,\delta(1 - |\xi|^2)\,\dxi,
\end{equation}
where $\delta$ denotes the one-dimensional Dirac delta and $\dxi$ denotes the Lebesgue measure on $\R^d$. Using \eqref{Sec2_delta_form} and \eqref{Sec2_sigma_delta} we have
\begin{align*}
\sigma\ast\sigma(\xi)&=\int_{\R^d}\delta(1-|\xi-\nu|)\,\delta(1-|\nu|)\,\dnu = \int_{\S^{d-1}}\delta(1-|\xi-\nu|)\, \dsigma(\nu)\\
&= 2\int_{\S^{d-1}}\delta\big(1-|\xi-\nu|^2\big)\,\dsigma(\nu) =2\int_{\S^{d-1}}\delta\big(2\xi\cdot\nu-|\xi|^2\big)\,\dsigma(\nu)\\
&=\frac{2}{|\xi|}\int_{\S^{d-1}}\delta\left(2\frac{\xi}{|\xi|}\cdot\nu-|\xi|\right)\,\dsigma(\nu).
\end{align*}
Passing to polar coordinates  \cite[Lemma A.5.2]{DX} in the sphere $\S^{d-1}$, we find 
\begin{align*}
\sigma\ast\sigma(\xi)&=\omega_{d-2}\,\frac{2}{|\xi|}\int_{-1}^1\delta\big(2 u-|\xi|\big)\,(1-u^2)^{\frac{d-3}{2}}\,\du\\
&=\omega_{d-2}\,\frac{1}{|\xi|}\int_{-1}^1\delta\left(u-\frac{|\xi|}{2}\right)(1-u^2)^{\frac{d-3}{2}}\,\du\\
&=\omega_{d-2}\,\frac{1}{|\xi|}\,\left(1-\frac{|\xi|^2}{4}\right)^{\frac{d-3}{2}}\,,
\end{align*}
if $|\xi|/2\in (-1,1)$. The result follows from this.
\end{proof}

\subsection{The sharp inequality} Consider the multilinear form
\begin{equation*}
T(f_1, f_2, \ldots,f_k)(\xi):= (f_1\sigma * f_2\sigma * \ldots* f_k\sigma)(\xi). 
\end{equation*}
Applying the Cauchy-Schwarz inequality in \eqref{Sec2_delta_form} with respect to the measure $\delta_d(\xi - \zeta_1 - \zeta_2 - \ldots - \zeta_k)\,\dsigma(\zeta_1)\, \ldots \, \dsigma(\zeta_k)$ we find
\begin{align}\label{Sec2_CS}
\begin{split}
|T(f_1, f_2, \ldots,f_k)(\xi)|^2 & \leq T({\bf 1}, {\bf 1}, \ldots,{\bf 1})(\xi)\,.\,T\big(|f_1|^2, |f_2|^2, \ldots\,,|f_k|^2\big)(\xi)\\
& = \sigma_{d-1}^{(k)}(\xi)\,\,T\big(|f_1|^2, |f_2|^2, \ldots \, ,|f_k|^2\big)(\xi),
\end{split}
\end{align}
where ${\bf 1}$ denotes the constant function equal to $1$. Using Plancherel's theorem, \eqref{Sec2_delta_form} and \eqref{Sec2_CS} we arrive at
\begin{align}\label{Sec2_weighted_ineq}
\begin{split}
&\left\|\prod_{j=1}^k \widehat{f_j\sigma}\right\|_{L^{2}(\R^d)}^2 = (2\pi)^d \left\|T(f_1, f_2, \ldots,f_k)\right\|_{L^{2}(\R^d)}^2 \\
& \ \ \ \ \  \leq  (2\pi)^d \int_{\R^d}  \sigma_{d-1}^{(k)}(\xi)\,\,T\big(|f_1|^2, |f_2|^2, \ldots \, ,|f_k|^2\big)(\xi)\,\dxi\\
 & \ \ \ \ \ = (2\pi)^{d}  \int_{\R^d}\int_{(\S^{d-1})^k} \sigma^{(k)}(\xi) \, \delta_d(\xi - \zeta_1 - \ldots - \zeta_k)\,\left(\prod_{j=1}^k |f_j(\zeta_j)|^2\right)\, \dsigma(\zeta_1)\,\dsigma(\zeta_2)\, \ldots\, \dsigma(\zeta_k) \,\dxi\\
& \ \ \ \ \ =  (2\pi)^{d} \int_{(\S^{d-1})^k} \sigma^{(k)}(\zeta_1 + \zeta_2 + \ldots+ \zeta_k) \,\left(\prod_{j=1}^k |f_j(\zeta_j)|^2\right)\, \dsigma(\zeta_1)\,\dsigma(\zeta_2)\, \ldots\, \dsigma(\zeta_k),
\end{split}
\end{align}
which is our desired inequality. 

\medskip

If $(d,k) \neq (2,2)$, both sides of \eqref{Sec2_weighted_ineq} are finite for $f_1 = f_2 = \ldots= f_k = {\bf 1}$, in which case we have equality. In fact, in this case, both sides are equal to $\|\widehat{\sigma}\|_{L^{2k}(\R^d)}^{2k} = (2\pi)^d \,\sigma_{d-1}^{(2k)}(0)$, which is finite by \eqref{Intro_conv_k} since $J_{v}(x) = O( |x|^{-1/2})$ as $x \to \infty$. This shows that our inequality is sharp.

\subsection{The cases of equality} \label{CE} 
Assume that the right-hand side of \eqref{Sec2_weighted_ineq} is finite and that we have equality in \eqref{Sec2_weighted_ineq}. Then the right-hand side of \eqref{Sec2_CS} is finite and we have equality in \eqref{Sec2_CS} for all $\xi$ in a subset $A_1 \subset \ov{B(k)}$ of full Lebesgue measure (note that both sides of \eqref{Sec2_CS} are zero for $\xi \notin \ov{B(k)}$).

\smallskip

Let $\xi \in A_1$ and consider the singular measure on $(\S^{d-1})^k$ given by 
\begin{equation}\label{Def_Psi}
\d \Psi_{\xi} = \delta_d(\xi - \zeta_1 - \zeta_2 - \ldots - \zeta_k)\,\dsigma(\zeta_1)\, \ldots \, \dsigma(\zeta_k).
\end{equation}
The condition of equality in the Cauchy-Schwarz inequality tells us that there exists a function $h$ on $A_1$ such that 
\begin{equation}\label{Sec2_Cond_eq_Sigma}
f_1(\zeta_1)f_2(\zeta_2) \ldots f_k(\zeta_k) = h(\xi)
\end{equation}
for $\Psi_{\xi}$-a.e. $(\zeta_1, \zeta_2, \ldots, \zeta_k) \in (\S^{d-1})^k$. If this is the case, then
\begin{equation*}
h(\xi) = \frac{\int_{(\S^{d-1})^k} f_1(\zeta_1)f_2(\zeta_2) \ldots f_k(\zeta_k)\,\d\Psi_{\xi}(\zeta_1,\zeta_2, \ldots, \zeta_k)}{\int_{(\S^{d-1})^k} \d\Psi_{\xi}(\zeta_1,\zeta_2, \ldots, \zeta_k)} = \frac{(f_1\sigma * f_2\sigma * \ldots* f_k\sigma)(\xi)}{\sigma^{(k)}(\xi)}\,,
\end{equation*}
and we see that $h$ is a Lebesgue measurable function. Note that $\sigma^{(k)}(\xi) >0$ for all $|\xi| < k$ (this follows from the explicit evaluation in Lemma \ref{biconvsigma} and induction on $k$) and we might have $\sigma^{(k)}(\xi) = +\infty$ only on a set of Lebesgue measure zero. Consider the set
\begin{equation*}
E = \left\{(\zeta_1,\zeta_2, \ldots, \zeta_k) \in (\S^{d-1})^k; \ \ f_1(\zeta_1)f_2(\zeta_2) \ldots f_k(\zeta_k) \neq h(\zeta_1 + \zeta_2 +  \ldots+ \zeta_k)\right\}
\end{equation*}
and let $\sigma^{k}$ denote the product measure on $(\S^{d-1})^k$. We claim that $\sigma^k (E) = 0$. In fact,
\begin{align*}\label{Sec2_Cal_E_Etilde}
\begin{split}
\sigma^k (E) &= \int_{(\S^{d-1})^k} \chi_{E}(\zeta_1, \ldots, \zeta_k) \,\dsigma(\zeta_1)\, \ldots\, \dsigma(\zeta_k)\\
& =  \int_{(\S^{d-1})^k} \int_{\R^d} \chi_{E}(\zeta_1, \ldots, \zeta_k)\,\delta_d(\xi - \zeta_1 - \zeta_2 - \ldots - \zeta_k)\,\dxi\,\dsigma(\zeta_1)\, \ldots\, \dsigma(\zeta_k)\\
& = \int_{\R^d} \int_{(\S^{d-1})^k}  \chi_{E}(\zeta_1, \ldots, \zeta_k)\, \d\Psi_{\xi} (\zeta_1, \ldots, \zeta_k)\, \dxi\\
& = 0.
\end{split}
\end{align*}
Therefore we must have, for this $h: \ov{B(k)} \to \C$ measurable,
\begin{equation} \label{Sec2_Functional_Equation}
f_1(\zeta_1)f_2(\zeta_2) \ldots f_k(\zeta_k) = h(\zeta_1 + \zeta_2 +  \ldots+ \zeta_k)
\end{equation}
for $\sigma^k-$a.e. $(\zeta_1, \zeta_2,  \ldots, \zeta_k) \in (\S^{d-1})^k$.

\medskip

On the other hand, if \eqref{Sec2_Functional_Equation} holds, we can reverse all the steps above to conclude that we have equality a.e. in \eqref{Sec2_CS} and thus equality in \eqref{Sec2_weighted_ineq} (possibly with both sides being infinity).

\subsection{Characterization of the extremizers}

The characterization of the functions $f_j:\S^{d-1} \to \C$ \,($1 \leq j \leq k$) that satisfy the functional equation \eqref{Sec2_Functional_Equation} is given by Theorem \ref{Sec3_Char_ext}, whose proof we postpone to the next section. Assuming this result, let us conclude the proof of Theorem \ref{Thm2}.

\medskip

If the right-hand side of \eqref{Intro_weighted} is finite and we have equality, \eqref{Sec2_Functional_Equation} and Theorem \ref{Sec3_Char_ext} show that \eqref{Intro_extremizers} must hold (recall that we are assuming $f_j \neq 0$). Conversely, if \eqref{Intro_extremizers} holds we have that the right-hand side of \eqref{Intro_weighted} is finite (since all $f_j$'s are uniformly bounded) and, as observed in the previous subsection, we find that equality occurs in \eqref{Intro_weighted}.


\section{Proof of Theorem \ref{Sec3_Char_ext} - revisiting the work of Charalambides}

\subsection{Preliminaries} In the work \cite{Ch}, M. Charalambides developed a thorough study of the solutions of the Cauchy-Pexider functional equation \eqref{Sec3_Functional_Equation} (for sumsets of general submanifolds $M \subset \R^d$), building up on the work initiated by Christ and Shao \cite{CS2} for the sphere $\S^2 \subset \R^3$. In particular, Charalambides \cite{Ch} establishes the following result:

\begin{lemma}\label{Lem6_Ch_result}
{\rm (} \!\!{\rm cf.} \cite[Theorems 1.2, 1.6 and 1.8]{Ch}\! \!{\rm)}
Theorem \ref{Sec3_Char_ext} is true in the cases $k=2$; $d \geq 3$ and $k=3$; $d \geq 2$, under the additional assumption that $\sigma\big(f_j^{-1}(\{0\})\big) = 0$, for all $j =1,2, \ldots, k$.
\end{lemma}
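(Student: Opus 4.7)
The plan is to follow the strategy of Charalambides \cite{Ch} (building on Christ--Shao \cite{CS2}), proceeding in two main stages: first, a smoothing-and-logarithm reduction from the multiplicative a.e.~equation \eqref{Sec3_Functional_Equation} to an additive Cauchy--Pexider equation; second, a fiber-geometry analysis that pins down the exponential form. The non-vanishing hypothesis $\sigma(f_j^{-1}(\{0\})) = 0$ is used crucially in both stages.

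\textbf{Smoothing and logarithm.} Convolve each $f_j$ against a smooth bump $\varphi$ on $\mathrm{SO}(d)$, setting $\tilde f_j(\zeta) := \int_{\mathrm{SO}(d)} \varphi(R) f_j(R\zeta)\,\d R$; an analogous multi-parameter averaging converts \eqref{Sec3_Functional_Equation} into a pointwise identity among smooth surrogates. Because $f_j \neq 0$ a.e., one can choose $\varphi$ so that $\tilde f_j$ is nonvanishing on an open dense set, on which a consistent logarithm branch $g_j := \log \tilde f_j$ exists (using simple connectivity of $\S^{d-1}$ when $d \geq 3$, with care for winding when $d = 2$). This reduces the question to showing that solutions of
\begin{equation*}
g_1(\zeta_1) + \ldots + g_k(\zeta_k) = G(\zeta_1 + \ldots + \zeta_k)
\end{equation*}
must have the affine form $g_j(\zeta) = a_j + \nu \cdot \zeta$; the exponential conclusion for $f_j$ then follows after undoing the smoothing.

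\textbf{Fiber analysis.} For $k=2$, $d \geq 3$, parametrize the fiber $\{\zeta_1 + \zeta_2 = w\} \subset (\S^{d-1})^2$ by $\zeta_{1,2} = w/2 \pm v$ with $v$ ranging over the $(d-2)$-sphere $\{v \perp w : |v|^2 = 1 - |w|^2/4\}$; this is positive-dimensional exactly when $d \geq 3$. The sign flip $v \mapsto -v$ forces $(g_1 - g_2)(\zeta_1) = (g_1 - g_2)(\zeta_2)$ for every pair of non-antipodal points, so $g_1 - g_2$ is constant and WLOG $g_1 = g_2 =: g$. Differentiating $g(\zeta_1) + g(\zeta_2) = \tilde G(\zeta_1 + \zeta_2)$ along the one-parameter family $\dot\zeta_1 = -\dot\zeta_2 = u \in \zeta_1^\perp \cap \zeta_2^\perp$ yields $[\nabla^T g(\zeta_1) - \nabla^T g(\zeta_2)] \cdot u = 0$ for all such $u$, i.e., $\nabla^T g(\zeta_1) - \nabla^T g(\zeta_2) \in \mathrm{span}\{\zeta_1, \zeta_2\}$; as the pair varies, this rigidifies $\nabla^T g$ to the tangential projection of a constant $\nu \in \R^d$, giving $g(\zeta) = a + \nu \cdot \zeta$. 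For $k=3$, $d \geq 3$, fix $\zeta_3$ where $f_3 \neq 0$: the equation in $(\zeta_1, \zeta_2)$ is a $k=2$, $d \geq 3$ Cauchy--Pexider relation, so the previous argument gives $f_j(\zeta) = c_j(\zeta_3)\, e^{\nu(\zeta_3) \cdot \zeta}$ for $j = 1, 2$; comparing two values of $\zeta_3$ forces $\nu$ to be constant, and substitution identifies $f_3$. For the remaining $d=2$, $k=3$ subcase (which cannot be reduced since the $(d,k)=(2,2)$ analog fails), parametrize $\zeta_j = (\cos\theta_j, \sin\theta_j)$ and differentiate the additive equation along two independent tangent fields on the one-dimensional fiber $\{\zeta_1 + \zeta_2 + \zeta_3 = w\} \subset (\S^1)^3$; the resulting closed first-order system in the $g_j'$ admits only the affine solutions $g_j(\theta) = a_j + \nu \cdot \zeta_j$.

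\textbf{Main obstacle.} The most delicate step is the smoothing: the multi-parameter rotational averaging must respect the singular structure of the fiber measure while preserving non-vanishing, so that a single-valued logarithm exists on a connected region of $\S^{d-1}$. The other sensitive piece is the $d=2$, $k=3$ subcase, where the one-dimensional fiber admits no rotational slack and all rigidity must be squeezed from the tangential ODE system; this is precisely where the transition from $k=2$ (which fails in $d=2$) to $k=3$ (which succeeds) occurs, and handling it requires a careful parametrization rather than a symmetry reduction.
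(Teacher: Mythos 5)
The first thing to note is that the paper does not prove this lemma at all: it is imported verbatim from Charalambides \cite[Theorems 1.2, 1.6 and 1.8]{Ch} (building on Christ--Shao \cite{CS2}), and the paper's own work in Section~3 begins only where this lemma ends, namely in removing the hypothesis $\sigma\big(f_j^{-1}(\{0\})\big)=0$ and pushing to higher $k$. So you are attempting a reconstruction of the cited proof rather than of anything in the paper, which is legitimate, but your reconstruction has a genuine gap at its first and hardest step. Convolving each $f_j$ separately over $SO(d)$ does not preserve the Cauchy--Pexider structure: from $f_1(\zeta_1)f_2(\zeta_2)=h(\zeta_1+\zeta_2)$ one gets
$\tilde f_1(\zeta_1)\tilde f_2(\zeta_2)=\iint \varphi(R)\varphi(S)\,h(R\zeta_1+S\zeta_2)\,\d R\,\d S$,
and $R\zeta_1+S\zeta_2$ is not a function of $\zeta_1+\zeta_2$, so the smoothed surrogates satisfy no equation of the same form and the subsequent logarithm/differentiation steps have nothing to act on. The regularity upgrade (measurable $\Rightarrow$ continuous $\Rightarrow$ smooth) is precisely the hard part of this circle of results, and the actual mechanism in \cite{CS2, Ch} is different: one first transfers the functional equation from the sphere to the \emph{sumset}, producing an identity $h(x)h(y)=H(x+y)$ for a.e.\ $(x,y)$ in a product of open balls in $\R^d$ (this is the transversality/submersion argument that the paper records as Lemma \ref{Lem7}), and only there, where genuine translation invariance is available, does one mollify (integrate in $y$ against a bump to see that $h$ agrees a.e.\ with a continuous, then smooth, function) and solve the resulting ODE. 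Your proposal skips this transfer entirely, and without it the smoothing cannot be carried out.

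The second stage of your outline is closer to the truth once smoothness is granted: the antipodal swap $v\mapsto -v$ on the fiber $\{\zeta_1+\zeta_2=w\}$ and the differentiation along $u\in\zeta_1^{\perp}\cap\zeta_2^{\perp}$ (nonempty exactly when $d\ge 3$) are standard and correct, and the reduction of $k=3$, $d\ge3$ to $k=2$ by freezing $\zeta_3$ with $f_3(\zeta_3)\neq 0$ matches the induction the paper itself uses later (Step 4 of Section 3). Two smaller points: for complex-valued $f_j$ the passage to a single-valued logarithm is not automatic even on a simply connected sphere minus a null set, and \cite{CS2, Ch} handle modulus and phase separately rather than choosing a branch; and in the $d=2$, $k=3$ case the fiber $\{\zeta_1+\zeta_2+\zeta_3=w\}\subset(\S^1)^3$ is one-dimensional, so it does not carry ``two independent tangent fields'' --- the correct statement is that differentiating along the single fiber direction forces $(g_1'(\theta_1),g_2'(\theta_2),g_3'(\theta_3))$ into the row space of $[\gamma'(\theta_1),\gamma'(\theta_2),\gamma'(\theta_3)]$, i.e.\ $g_j'(\theta_j)=\nu(w)\cdot\gamma'(\theta_j)$, after which one shows $\nu$ is constant (this is what \cite[Section 5]{Ch}, revisited in Step 5 of the paper, actually does).
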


\noindent Our work in this section is to {\it remove the assumption} $\sigma\big(f_j^{-1}(\{0\})\big) = 0$, $j =1,2, . . ., k$, in the case of the sphere $\S^{d-1}$, and to extend this result to higher $k$. We start with a lemma that essentially follows from the work of Charalambides. We only indicate the main modifications needed with respect to the corresponding proof in \cite{Ch}. 

 \begin{lemma}\label{Lem7}
{\rm (} \!\!{\rm cf.} \cite[Proof of Theorem 1.2]{Ch} \!\!{\rm)}  For $d \geq 3$, let $f_1,f_2: \S^{d-1} \to \C$ and $h:\ov{B(2)} \to \C$ be measurable functions satisfying 
\begin{equation}\label{Sug_Diogo_Sec3}
f_1(\zeta_1)f_2(\zeta_2) = h(\zeta_1 + \zeta_2 )
\end{equation}
 for $\sigma^2-$a.e. $(\zeta_1, \zeta_2) \in (\S^{d-1})^2$. Then for each $\xi \in B(2) \setminus\{0\}$ there exists a ball $B_{\xi} = B(\xi,r_{\xi}) \subset B(2) \setminus\{0\}$ and a measurable function $H_{\xi}: B_{\xi} + B_{\xi} \to \C$ such that 
\begin{equation}\label{hH}
h(x) \,h(y) = H_{\xi}(x+y)
\end{equation}
for Lebesgue a.e. $(x,y) \in B_{\xi} \times B_{\xi}$. 
\end{lemma}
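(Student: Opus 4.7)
The plan is to exploit the positive dimension $d-2$ of the fibers of the sum map $S: (\S^{d-1})^2 \to \ov{B(2)}$, $S(\zeta_1, \zeta_2) = \zeta_1 + \zeta_2$, which is available precisely because $d \geq 3$. For $x \in \R^d$ with $0 < |x| < 2$, the constraints $|\zeta_1| = |\zeta_2| = 1$ force $\zeta_1 - \zeta_2 \perp x$ and $|\zeta_1 - \zeta_2|^2 = 4 - |x|^2$, giving the explicit parameterization
\[ S^{-1}(x) = \left\{ \Big(\tfrac{x}{2} + \tfrac{\sqrt{4-|x|^2}}{2}\, u, \ \tfrac{x}{2} - \tfrac{\sqrt{4-|x|^2}}{2}\, u\Big) : u \in \S^{d-1} \cap x^\perp \right\}, \]
which depends smoothly on $x$. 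I would choose $B_\xi = B(\xi, r_\xi) \subset B(2) \setminus \{0\}$ small enough that the associated decomposition bundle over $B_\xi$ is a smooth submersion, and use the coarea formula to transfer the a.e.\ identity \eqref{Sug_Diogo_Sec3} to $(x, u)$-coordinates. The outcome is that, for Lebesgue a.e.\ $x \in B_\xi$ and $\sigma$-a.e.\ $u \in \S^{d-1} \cap x^\perp$,
\[ h(x) = f_1(\zeta_1(x,u))\, f_2(\zeta_2(x,u)), \]
where $\zeta_j(x,u)$ denote the two factors in the parameterization above.

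The core of the argument is then a swap. For a.e.\ quadruple $(x, y, u, v)$ with $x, y \in B_\xi$ and $u, v \in \S^{d-1}$ satisfying $u \perp x$, $v \perp y$, the functional equation gives
\[ h(x)\, h(y) = \big[f_1(\zeta_1(x,u))\, f_2(\zeta_2(y,v))\big] \cdot \big[f_1(\zeta_1(y,v))\, f_2(\zeta_2(x,u))\big] = h(a)\, h(b), \]
with $a = \zeta_1(x,u) + \zeta_2(y,v)$, $b = \zeta_1(y,v) + \zeta_2(x,u)$ and $a + b = x+y$. Writing $s = x+y$, this takes the form
\[ h\!\left(\tfrac{s}{2} + c(u,v)\right) h\!\left(\tfrac{s}{2} - c(u,v)\right) = h(x)\, h(y), \qquad c(u,v) := \tfrac{\sqrt{4-|x|^2}}{2}\, u - \tfrac{\sqrt{4-|y|^2}}{2}\, v, \]
so the split product $h(\tfrac{s}{2}+c)\, h(\tfrac{s}{2}-c)$ is constant along the orbits of the swap action for fixed $s$.

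To conclude I would argue that these orbits, combined with the freedom to perturb $(x, y)$ within the fiber $\{(x, y) : x+y = s\}$, cover the entire fiber up to a null set, thereby forcing $h(x)h(y)$ to depend only on $s$ and providing the desired measurable $H_\xi: B_\xi + B_\xi \to \C$. For $d \geq 4$ the parameter count $2(d-2) \geq d$ already makes the swap map $(u,v) \mapsto c(u,v)$ sweep out a set of full Lebesgue measure in the fiber, and the conclusion follows by Fubini. The main obstacle is the case $d = 3$, where the $2$-dimensional image of a single swap cannot fill the $3$-dimensional fiber, forcing one to iterate swaps (or invoke additional perturbations) and keep delicate account of how the various a.e.\ identities compose under the coarea formula. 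This measure-theoretic bookkeeping, inherited from \cite{Ch}, is what one must redo carefully to guarantee that the assembled identities descend to a genuine a.e.\ Cauchy-Pexider equation for $h$ on $B_\xi \times B_\xi$.
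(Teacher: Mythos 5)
Your proposal takes a genuinely different, more hands-on route than the paper, which does not re-derive the local Cauchy--Pexider equation for $h$ directly but instead constructs, for each $\xi=z_1\in B(2)\setminus\{0\}$, an eight-point configuration $(x,y)\in (M^4\times M^4)\cap\Pi$ that is a regular point of the addition map $\pi_M$ with $\pi_M(x,y)=(z_1,z_1,z_3,z_4)$, and then invokes the argument of Charalambides verbatim. As it stands, your sketch has a genuine gap: the case $d=3$ is not proved. For fixed $(x,y)$ with $x+y=s$, your swap only yields $h(x)\,h(y)=h(\tfrac{s}{2}+c)\,h(\tfrac{s}{2}-c)$ for $c$ in the image of $(u,v)\mapsto \alpha u-\beta v$, a set of dimension $2(d-2)$; when $d=3$ this is a Lebesgue-null subset of the $d$-dimensional fiber, so two pairs $(x,y)$ and $(x',y')$ in the same fiber cannot be linked by a single swap, and you explicitly defer the needed iteration of swaps and the accompanying measure-theoretic bookkeeping rather than carrying them out. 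Since $d=3$ is exactly the case feeding into Theorem \ref{Thm1}(a) (the Stein--Tomas/Foschi case), this is not a removable technicality. The constraints $x_1+y_2=x_3+y_4$ and $y_1+x_2=y_3+x_4$ defining $\Pi$ are precisely a double swap, and the point of the Christ--Shao/Charalambides framework is that one varies all eight sphere points simultaneously, not just the fiber parameters $(u,v)$ for frozen $(x,y)$; regularity of $\pi_M$ at one well-chosen configuration, which is what the paper's proof verifies, is what supplies the extra parameters needed to close the argument in every dimension $d\ge 3$.

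Even for $d\ge 4$ your dimension count is stated too strongly: the image of a single swap is a compact proper subset of the fiber (one always has $|\alpha-\beta|\le |c|\le \alpha+\beta$), so it does not ``sweep out a set of full Lebesgue measure in the fiber.'' What you actually need is that the images attached to nearby base pairs in the same fiber overlap in a set of positive measure, together with a regular-value/coarea argument guaranteeing that the a.e.\ identities in $(x,y,u,v)$ descend through the pushforward $(u,v)\mapsto c$ (the functional equation \eqref{Sug_Diogo_Sec3} holds only $\sigma^2$-a.e., and for fixed $(x,y)$ the cross pairs $(\zeta_1(x,u),\zeta_2(y,v))$ form a $\sigma^2$-null set, so all such statements must be sliced via Fubini in the base variables as well). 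Finally, the measurability of $H_\xi$ on $B_\xi+B_\xi$ is never addressed; some averaged representation of $h(x)h(s-x)$ over the fiber, as in Section \ref{CE} of the paper, is needed to produce it. In short, the strategy is plausible for $d\ge 4$ after substantial repair, but the lemma as stated, including $d=3$, is not established by the proposal.
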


\begin{proof} We briefly recall the notation used in \cite{Ch}. Let $M = \S^{d-1}$ and write points in $(\R^d)^4 \times (\R^d)^4$ as $(x,y)$, where $x = (x_1,x_2,x_3,x_4)$ and $y = (y_1, y_2, y_3, y_4)$, so that $x_j, y_j \in \R^d$ for $1 \leq j \leq 4$. Let $\Pi$ be the hyperplane in $(\R^d)^4 \times (\R^d)^4$ defined by $x_1 + y_2 = x_3 + y_4$ and $y_1 + x_2 = y_3 + x_4$ and let $\mathcal{P}_M = (M^4 \times M^4) \cap \Pi$. Let $\mathcal{S}_M$ be the set of smooth points of $\mathcal{P}_M$, i.e. the points where $M^4 \times M^4$ intersects $\Pi$ transversally, and let $\Lambda \subset (\R^d)^4$ be the $3d$-dimensional hyperplane given by the points $(w_1, w_2, w_3,w_4) \in  (\R^d)^4$ such that $w_1 + w_2 = w_3 + w_4$. The linear addition map $(\R^d)^4 \times (\R^d)^4 \mapsto (\R^d)^4$ given by $(x,y) \mapsto x+y$ restricts to a smooth map $\pi_M : \mathcal{S}_M \to \Lambda$ and we call $\mathcal{R}_M$ the set of regular points of $\pi_M$, i.e. the points of $\mathcal{S}_M$ where $\pi_M$ is a submersion. Finally, let 
$$R_M = \{ x + y; \ x,y \in M; \ x \neq \pm y\} = B(2) \setminus \{0\}.$$

\smallskip

The crux of the matter here, given $\xi = z_1 \in R_M = B(2) \setminus \{0\}$, is to choose a point $z = (z_1,z_2,z_3,z_4)$, with $z_2 = z_1$, such that $z = \pi_M (x,y)$ for some $(x,y) \in \mathcal{R}_M$ (see \cite[Proof of Theorem 1.2, p. 239, line 6]{Ch}). If this choice can be made, the lemma follows from the argument in \cite[Proof of Theorem 1.2, p. 239, lines 6 - 23]{Ch}.

\smallskip

Since $z_1 \in R_M = B(2) \setminus \{0\}$ we start by choosing freely $x_1, y_1 \in \S^{d-1}$ such that 
$$z_1 = x_1 + y_1.$$
Note that this implies that $x_1 \neq \pm y_1$. Now choose $x_2$ and $y_2$, in a way that $x_2,y_2 \neq \pm x_1, \pm y_1$ and such that 
$$z_2 = x_2 + y_2 = z_1.$$
Note again that $x_2 \neq \pm y_2$. By \cite[Lemma 2.3]{Ch}, it already follows that the point $(x,y)$ that we are constructing belongs to $\mathcal{S}_M$, and \cite[Eq. (2.4)]{Ch} is partially fulfilled. Note also that 
$${\rm span}\{x_1, y_1\} \cap {\rm span}\{x_2, y_2\} = {\rm span}\{z_1\}.$$
Now we are relatively free to choose $x_3, x_4, y_3,y_4$. In fact these must satisfy
$$x_1 + y_2 = x_3 + y_4$$
and
$$y_1 + x_2 = y_3 + x_4,$$
which are the equations defining $\Pi$, and we must complete the conditions \cite[Eq. (2.4) and (2.5)]{Ch} in order to guarantee that the point $(x,y)$ belongs to $\mathcal{R}_M$. Note that $x_1 + y_2$ and $y_1 + x_2$ both belong to $R_M = B(2) \setminus \{0\}$. We can choose, for instance, $x_3$ close (but not equal) to $x_1$ and $y_4$ close (but not equal) to $y_2$, and similarly, $y_3$ close (but not equal) to $x_2$ and $x_4$ close (but not equal) to $y_1$. Therefore we can assure that $x_j \neq \pm y_j$ for all $j =1,2,3,4$ (this establishes  \cite[Eq. (2.4)]{Ch}) and ${\rm span}\{x_3, y_3\} \cap {\rm span}\{x_4, y_4\}$ is close to  ${\rm span}\{x_1, x_2\} \cap {\rm span}\{y_1, y_2\}$, which is a line different from ${\rm span}\{z_1\}$, thus leading to 
$${\rm span}\{x_1, y_1\} \cap {\rm span}\{x_2, y_2\} \cap {\rm span}\{x_3, y_3\} \cap {\rm span}\{x_4, y_4\} = \{0\}.$$
This completes \cite[Eq. (2.5)]{Ch}, which shows that $(x,y) \in \mathcal{R}_M$, and that 
$$\pi_M(x,y) = (z_1, z_1, z_3, z_4),$$
where $z_3 = x_3 + y_3$ and $z_4 = x_4 + y_4$.
\end{proof}

\subsection{Proof of Theorem \ref{Sec3_Char_ext}}

Throughout this proof we denote by $\lambda^d$ the $d$-dimensional Lebesgue measure.

\subsubsection{The case $k=2$ and $d \geq 3$} .
\medskip

\noindent {\it Step 1. Local argument.} Fix $\xi \in B(2)  \setminus\{0\}$ and let $B_{\xi}$ and $H_{\xi}$ as in Lemma \ref{Lem7}. We claim that $h(x) = c_{\xi}\, e^{\nu_{\xi} \cdot x}$ a.e. in $B_{\xi}$, for some $c_{\xi} \in \C$ and $\nu_{\xi} \in \C^d$.

\medskip

If $\lambda^d\big(h^{-1}(\{0\}) \cap B_{\xi}\big)=0$, we may use \cite[Lemma 2.1]{Ch} to reach the desired conclusion. 

\medskip

If $\lambda^d\big(h^{-1}(\{0\}) \cap B_{\xi}\big)>0$, we will be done if we prove that we can choose $c_{\xi}=0$. Suppose this is not the case. Let $A_1:=h^{-1}(\{0\}) \cap B_{\xi}$ and define $A_2:=B_{\xi} \setminus A_1$. Assuming $\lambda^d(A_1)>0$ and $\lambda^d(A_2)>0$, we aim at a contradiction.

\medskip

For a.e. $x\in A_1$, identity \eqref{hH} holds for a.e. $y\in B_{\xi}$ (this is a consequence of Fubini's theorem). Similarly, for a.e. $x\in A_2$, identity \eqref{hH} holds for a.e. $y\in A_2$. Let $\widetilde{A}_1, \widetilde{A}_2$ denote the full measure subsets of $A_1, A_2$, respectively, for which these conclusions hold. Then, given $\epsilon>0$, there exist $x_1\in\widetilde{A}_1$ and $ x_2\in\widetilde{A}_2$ such that $|x_1-x_2|<\epsilon$ (the existence of such $x_1, x_2$ is guaranteed by the hypotheses $\lambda^d(A_1), \lambda^d(A_2)>0$). Now, by the definition of $\widetilde{A}_1$, we conclude that $H_{\xi}\equiv 0$ a.e. on $x_1+B_{\xi}$. By the definition of $\widetilde{A}_2$, we conclude that $H_{\xi}\neq 0$ a.e. on $x_2+A_2$. However, for sufficiently small $\epsilon>0$, 
$$\lambda^d\big((x_1+B)\cap(x_2+A_2)\big)>0,$$ 
and we reach a contradiction. The conclusion is that, if $\lambda^d(A_1)>0$, then $\lambda^d(A_2)=0$ and thus $h\equiv 0$ a.e. on $B_{\xi}$.

\medskip

\noindent {\it Step 2. Local-to-global argument.} Take $\xi_0 \in  B(2)\setminus\{0\}$. From the previous step we know that there exist $c_{0} \in \C$ and $\nu_{0} \in \C^d$ such that $h(x) = c_{0}\, e^{\nu_{0} \cdot x}$ a.e. in $B_{\xi_0}$. Consider the set
\begin{equation*}
\Omega:=\{z\in B(2)\setminus\{0\}: h(x)=c_0 \,e^{\nu_0\cdot x}\textrm{ a.e. in a neighborhood of }z\}.
\end{equation*}
By construction, $\Omega$ is an open subset of $B(2)\setminus\{0\}$. We claim that $\Omega$ is also closed in $B(2)\setminus\{0\}$. To see this, suppose not, and take a point $\xi\in\overline{\Omega}\setminus\Omega$ (the closure is taken in $B(2)\setminus\{0\}$). Since $\xi\in B(2)\setminus\{0\}$, there exists an open ball $B_{\xi} = B(\xi,r_\xi)$ on which $h(x)=c_\xi \,e^{\nu_\xi\cdot x}$ almost everywhere. Since $\xi\in\overline{\Omega}$, the intersection $\Omega\cap B_{\xi}$ is nonempty. Take $z\in\Omega\cap B_{\xi}$. Then, since $z\in\Omega$, the identity $h(x)=c_0 \,e^{\nu_0\cdot x}$ holds almost everywhere in a sufficiently small ball $B_z\subset B_{\xi}$. Now, if $c_0=0$, then $c_\xi=0$ and $\xi\in\Omega$, a contradiction. If $c_0\neq 0$, it follows that $c_\xi=c_0$ and $\nu_\xi=\nu_0$ (this can be seen by differentiating the identity $c_\xi c_0^{-1}=e^{(\nu_\xi-\nu_0)\cdot x}$ with respect to the variable $x$). The conclusion is, again, that $\xi\in\Omega$, an absurd. We then conclude that $\Omega$ is closed in $B(2)\setminus\{0\}$ and, since this is a connected set, it follows that $\Omega=B(2)\setminus\{0\}$. 

\medskip

An application of the Lebesgue differentiation theorem yields $h(x)=c_0 \,e^{\nu_0\cdot x}$ a.e. in $B(2)\setminus\{0\}$.

\medskip

\noindent {\it Step 3. Conclusion in the case $k=2$ and $d \geq 3$.} We now achieve the conclusion for $f_1$ and $f_2$. Let us split the analysis in two cases:

\medskip

If $c_0\neq 0$, we claim that $\sigma\big(f_1^{-1}(\{0\})\big)=\sigma\big(f_2^{-1}(\{0\})\big)=0$ and the conclusion follows from Lemma \ref{Lem6_Ch_result}. In fact, if this were not the case, assume without loss of generality that $A_1 = f_1^{-1}(\{0\})$ is such that $\sigma(A_1) >0$. Let $Q$ be the set of pairs $(\zeta_1,\zeta_2) \in (\S^{d-1})^2$ for which \eqref{Sug_Diogo_Sec3} does not hold. By assumption $\sigma^2(Q) = 0$. Let $E = \{\zeta_1 + \zeta_2;\ \zeta_1 \in A_1,\, \zeta_2 \in \S^{d-1},\, (\zeta_1, \zeta_2) \notin Q\}$. Observe that 
\begin{align*}
\sigma * \sigma(E) & = \sigma^2\big\{ (w_1,w_2) \in (\S^{d-1})^2; \ w_1 +  w_2 \in E\big\}\\
& \geq \sigma^2\big\{ (\zeta_1,\zeta_2) \in (\S^{d-1})^2; \ \zeta_1 \in A_1,\, \zeta_2 \in \S^{d-1},\, (\zeta_1, \zeta_2) \notin Q\big\}\\
& = \sigma^2\big\{ (\zeta_1,\zeta_2) \in (\S^{d-1})^2; \ \zeta_1 \in A_1,\, \zeta_2 \in \S^{d-1}\big\}\\
& = \sigma(A_1)\, \sigma(\S^{d-1})\\
& >0.
\end{align*}
As noted in the introduction, the measures $\sigma\ast\sigma$ and $\lambda^d$ are mutually absolutely continuous on $B(2)$, and so we find that $h\equiv 0$ on a subset of $B(2)$ of positive $\lambda^d-$measure (namely $E$), a contradiction.

\medskip

If $c_0=0$, let $E_j = f_j^{-1}(\C \setminus \{0\})$ for $j=1,2$. We claim that we cannot have $\sigma(E_j)>0$ for $j=1,2$. In fact, if this were the case, arguing as above, the sumset $E_1 + E_2$ would have positive $\lambda^d-$measure and $h$ would be non-zero on a subset of $B(2)$ of positive $\lambda^d-$measure, a contradiction. Therefore, we must have $\sigma(E_j)=0$ for at least one $j$. This possibility falls under the item (ii) of Theorem \ref{Sec3_Char_ext}. 

\subsubsection{The case $k\geq3$ and $d \geq 3$} .
\medskip

\noindent {\it Step 4. Induction argument.} To extend the previous result for $k \geq 3$ in dimension $d\geq 3$, we proceed by induction on the degree of the multilinearity $k$.  We  start by showing how the trilinear case $k=3$ can be deduced from the case $k=2$. Suppose that 

\begin{equation}\label{trilinear}
f_1(\zeta_1)\,f_2(\zeta_2)\,f_3(\zeta_3)=h(\zeta_1+\zeta_2+\zeta_3)
\end{equation}
holds $\sigma^3-$a.e. on $(\S^{d-1})^3$. Then for $\sigma-$a.e. $\zeta_1\in \S^{d-1}$, identity \eqref{trilinear} holds for $\sigma^2-$a.e. $(\zeta_2,\zeta_3)\in (\S^{d-1})^2$. We split the analysis in two cases:

\medskip

If $f_j\equiv 0$ $\sigma-$a.e. for some $j$ with $1\leq j \leq 3$, we are done.

\medskip

Otherwise, let $E_1 = f_1^{-1}(\C \setminus \{0\})$. Then $\sigma(E_1) >0$. Choose $ z \in E_1$ for which
identity \eqref{trilinear} holds with $\zeta_1= z$ for $\sigma^2-$a.e. $(\zeta_2,\zeta_3)\in (\S^{d-1})^2$. Then, defining  $\widetilde{h}_{z}(\zeta):={f_1(z)^{-1}}\,h(\zeta+z)$, we have that
\begin{equation*}
f_2(\zeta_2)\,f_3(\zeta_3)=\widetilde{h}_{z}(\zeta_2+\zeta_3)\
\end{equation*}
for $\sigma^2-$a.e. $(\zeta_2,\zeta_3)\in (\S^{d-1})^2$. By the case $k=2$, there exist $c_2,c_3\in \C\setminus\{0\}$ and $\nu\in\C^d$ such that
\begin{equation*}
f_2(\zeta_2)=c_2 \,e^{\nu\cdot \zeta_2}\textrm{ and } f_3(\zeta_3)=c_3 \,e^{\nu\cdot \zeta_3}.
\end{equation*}
Repeating this argument for $f_2$ instead of $f_1$, we conclude that $f_1(\zeta_1)=c_1 \,e^{\nu\cdot \zeta_1}$ for some $c_1 \in \C\setminus\{0\}$ and the same $\nu \in\C^d$. The general $k-$linear case follows similarly by induction.

\subsubsection{The case $k = 3$ and $d = 2$} .
\medskip

\noindent {\it Step 5. Revisiting the argument of Charalambides in \cite[Section 5]{Ch}.} We now deal with the case of three functions $f_1, f_2, f_3$ on the circle $\S^1$. 

\medskip

If $f_j\equiv 0$ $\sigma-$a.e. for some $j$ with $1\leq j \leq 3$, we are done. 

\medskip

So assume that for every $1 \leq j \leq 3$ we have $\sigma\big(f_j^{-1}( \C\setminus \{0\})\big) > 0$. We shall prove that in this case we must have $\sigma\big(f_j^{-1}(\{0\})\big) = 0$ for $1 \leq j \leq 3$, and we will be done by Lemma \ref{Lem6_Ch_result}. This follows from the arguments in \cite[Section 5]{Ch} modulo some adjustments. First, let us define $\gamma: I = (0,2\pi) \to \R^2$ by 
\begin{equation*}
\gamma(x) = (\cos x, \sin x),
\end{equation*} 
which is a unit speed parametrization of the circle $\S^1$ (here we are excluding a point, but this is harmless) by the open interval $I$. Writing $F_{j}(x) = |f_j(\gamma(x))|$, we have the functional equation
\begin{equation}\label{Sec4_Fun_Eq_mod}
F_1(x_1)\, F_2(x_2)\, F_3(x_3) = H\left(\sum_{j=1}^3 \gamma(x_j)\right)
\end{equation}
for $\lambda^3-$ a.e. $(x_1, x_2, x_3) \in I^3$, with $H(z) = |h(z)|$. 

\medskip

We first show that all the $F_j$'s are bounded $\lambda-$a.e. in $I$, and therefore $H$ is also bounded $\lambda^3-$a.e. in $B(3)$. In fact, by hypothesis, the set $\{(x_2,x_3) \in I^2; F_2(x_2)F_3(x_3) >0\}$ has positive $\lambda^2$-measure, and we can choose $N$ large enough such that 
$$K = \{(x_2,x_3) \in I^2; N^{-1} < F_2(x_2)F_3(x_3) < N\}$$
verifies $\lambda^2(K) >0$. We may therefore choose a point $(u_2, u_3)$ that belongs to $K$, to the Lebesgue set of the characteristic function $\chi_{K}$, and such that ${\rm span}\,\{\gamma'(u_2), \gamma'(u_3)\} = \R^2$. Let $z = \gamma(u_2) + \gamma(u_3)$, and choose neighborhoods $U$ of $(u_2, u_3)$ and $V = B(z,r)$ such that the map $\beta: I^2 \to \R^2$ given by $\beta(x_2, x_3) = \gamma(x_2) + \gamma(x_3)$ is diffeomorphism from $U$ onto $V$. Let $K_1 = K \cap U$ and note that $\lambda^2 (K_1) >0$ (since $(u_2,u_3)$ is a Lebesgue point of $K$). Now let $I_1 \subset I$ be such that $\lambda(I \setminus I_1) =0$ and for each $x_1 \in I_1$ the functional equation \eqref{Sec4_Fun_Eq_mod} is satisfied at the point $(x_1, x_2, x_3)$ for $\lambda^2-$a.e. $(x_2, x_3) \in I^2$.

\medskip

By Lusin's theorem applied to $H|_{\gamma(I) + V}$, we may find a compact subset $T$ of the open set $\gamma(I) + V$ and a constant $C < \infty$ such that 
\begin{equation}\label{Lusin1}
\lambda^2((\gamma(I) + V)\setminus T) < \lambda^2(\beta(K_1)) \neq 0
\end{equation}
and for all $w \in T$ we have $H(w) \leq C$. Let $x_1 \in I_1$. Since $\lambda^2(\gamma(x_1) + \beta(K_1)) = \lambda^2(\beta(K_1))$, we find by \eqref{Lusin1} that $\lambda^2\big( T \cap (\gamma(x_1) + \beta(K_1))\big) >0$. We conclude that there exists $(x_2,x_3) \in K_1$ such that $H(\gamma(x_1) + \beta(x_2, x_3)) \leq C$ and \eqref{Sec4_Fun_Eq_mod} holds at $(x_1,x_2,x_3)$, leading to 
\begin{equation*}
F_1(x_1) \leq N\,C.
\end{equation*}
This proves that $F_1$ is bounded $\lambda-$a.e. in $I$. We may apply the same argument to $F_2$ and $F_3$.

\medskip

Having constructed the diffeomorphism $\beta: U \to V$ above, and since the point $(u_2, u_3)$ belongs to $K$ (and is a Lebesgue point of $K$) we can pick a small ball $U' \subset U$ around $(u_2, u_3)$ to see that 
\begin{equation*}
\int_{U} F_2(x_2)\, F_3(x_3) \,\dx_2\, \dx_3 \geq \int_{U'} F_2(x_2)\, F_3(x_3) \,\dx_2\, \dx_3 >0.
\end{equation*}
Following the outline of \cite[Section 5]{Ch} we show that the function $F_1$ is equal $\lambda-$a.e. to a differentiable function, and by analogy so are $F_2$ and $F_3$, and thus $H$. From now on we make these identifications.

\medskip

For every $x_1 \in I$ such that $F_1(x_1) \neq 0$ we argue as in \cite[Section 5, Eq. (5.5)]{Ch} (here we might have to make a new choice of the neighborhood $U$ in order to have $F_2(x_2)F_3(x_3) \neq 0$ for $(x_2,x_3) \in U$) to conclude that there is a neighborhood of $B \subset I$ of $x_1$ such that $F_1(x) = c_B \,e^{\nu_{B}\cdot \gamma(x)}$ for all $x \in B$, where $c_B \in \C\setminus \{0\}$ and $\nu_B \in \C^2$. We now argue as in Step 2, to conclude that for every connected component $W$ of the set $\{x \in I;\ F_1(x) \neq 0\}$ we must have $F_1(x) = c_W \,e^{\nu_{W}\cdot \gamma(x)}$ for $x \in W$. Since $F_1$ is continuous, this plainly implies that either $F_1 \equiv 0$ (which is not the case by hypothesis) or $F_1(x) \neq 0$ for all $x \in I$, which is the desired conclusion. The same holds for $F_2$ and $F_3$, and we conclude the proof of our original claim, i.e. that $\sigma\big(f_j^{-1}(\{0\})\big) = 0$ for $1 \leq j \leq 3$.

\subsubsection{The case $k\geq 4$ and $d = 2$} .
\medskip

\noindent {\it Step 6. Induction argument for $d = 2$.} In order to prove Theorem \ref{Sec3_Char_ext} in dimension $d=2$ for $k \geq 4$ we proceed by induction as in the Step 4. 

\medskip

This concludes the proof of Theorem \ref{Sec3_Char_ext}.


\section{Proof of Theorem \ref{Thm1} - cases (b) and ( \!\!c): symmetrization over $SO(d)$}

\subsection{Reduction to the case $L^{2k}$ to $L^{2k}$} \label{Holder reduction}

If we prove Theorem \ref{Thm1} in the case $(d,2k,2k)$, for  $d,k \geq 2$ with $(d,k) \neq (2,2)$, the corresponding cases $(d,2k,q)$ for $q >2k$ follow directly. In fact, by H\"{o}lder's inequality we have
\begin{equation}\label{Sec5_eq1.0}
\big\|\widehat{f \sigma}\big\|_{L^{2k}(\R^{d})} \leq C(d,2k,2k)\, \|f\|_{L^{2k}(\S^{d-1})} \leq  C(d,2k,2k)\, \omega_{d-1}^{\frac{1}{2k} - \frac{1}{q}} \|f\|_{L^{q}(\S^{d-1})} = C(d,2k,q)\,\|f\|_{L^{q}(\S^{d-1})}.
\end{equation}
In order to have equality in \eqref{Sec5_eq1.0}, we must have equality in the leftmost inequality, which happens only for the functions given by \eqref{CV_ext}. Since the functions in \eqref{CV_ext} have constant absolute value, we also have equality in H\"{o}lder's inequality, and thus in \eqref{Sec5_eq1.0}.

\subsection{$L^{2k}$ to $L^{2k}$ inequality} Let $d,k \geq 2$ with  $(d,k) \neq (2,2)$, and write
\begin{equation*}
K(\zeta_1, \zeta_2, \ldots,\zeta_k) = \sigma^{(k)}(\zeta_1 + \zeta_2 + \ldots + \zeta_k),
\end{equation*}
where $\zeta_j \in \S^{d-1}$ for $1 \leq j \leq k$. From Theorem \ref{Thm2} we have
\begin{align}\label{kfirststep} 
\big\|\widehat{f\sigma}\big\|_{L^{2k}(\R^d)}^{2k} \leq (2\pi )^d\,\int_{(\S^{d-1})^k}  |f(\zeta_1)|^2\ldots |f(\zeta_k)|^2\,K(\zeta_1, \ldots ,\zeta_k)\,\dsigma(\zeta_1)\ldots \dsigma(\zeta_k),
\end{align}
with nontrivial equality if and only if 
\begin{equation}\label{Sec5_equality}
f(\zeta) = c\,e^{\nu \cdot \zeta},
\end{equation}
with $c \in \C \setminus \{0\}$ and $\nu \in \C^d$. Let $SO(d)$ denote the special orthogonal group, i.e. the orthogonal $d \times d$ real matrices with determinant $1$. Note that the surface measure $\dsigma$ is invariant under the action of $SO(d)$ and that our kernel $K$ verifies
\begin{equation*}
K(R\zeta_1, R\zeta_2, \ldots,R\zeta_k)  = K(\zeta_1, \zeta_2, \ldots,\zeta_k),
\end{equation*}
for every $\zeta_1, \zeta_2, \ldots, \zeta_k \in \S^{d-1}$ and $R \in SO(d)$. Equipping the compact group $SO(d)$ with its normalized Haar measure $\dmu$, we can rewrite the integral on the right-hand side of \eqref{kfirststep} as
\begin{align}\label{Sec5_H_1}
\begin{split}
& \int_{SO(d)}\left(\int_{(\S^{d-1})^k} |f(R \zeta_1)|^2\ldots |f(R \zeta_k)|^2 \, K(\zeta_1,\ldots,\zeta_k)\,\dsigma(\zeta_1)\ldots \dsigma(\zeta_k)\right) \dmu(R)\\
& \ \ \ \ \ \ \ =  \int_{(\S^{d-1})^k}\left(\int_{SO(d)} |f(R \zeta_1)|^2\ldots |f(R \zeta_k)|^2 \,\dmu(R)\right)K(\zeta_1,\ldots,\zeta_k) \,\dsigma(\zeta_1)\ldots \dsigma(\zeta_k).
\end{split}
\end{align}
The inner integral can be estimated with H\"{o}lder's inequality:
\begin{equation}\label{Sec5_Holder}
\int_{SO(d)} |f(R \zeta_1)|^2\ldots |f(R \zeta_k)|^2 \,\dmu(R) \leq \prod_{j=1}^k \left(\int_{SO(d)} |f(R \zeta_j)|^{2k}\,\dmu(R)\right)^{1/k}.
\end{equation}
Note that
\begin{align}\label{Sec5_H_2}
\begin{split}
\int_{SO(d)} |f(R \zeta)|^{2k}\,\dmu(R) & = \frac{1}{\omega_{d-1}} \int_{\S^{d-1}} \int_{SO(d)} |f(R \zeta)|^{2k}\,\dmu(R) \,\dsigma(\zeta) \\
& =  \frac{1}{\omega_{d-1}} \int_{SO(d)}\int_{\S^{d-1}}  |f(R \zeta)|^{2k}\dsigma(\zeta)\, \dmu(R)\\
& = \frac{1}{\omega_{d-1}} \,\|f\|_{L^{2k}(\S^{d-1})}^{2k},
\end{split}
\end{align}
for any $\zeta \in \S^{d-1}$. From \eqref{kfirststep}, \eqref{Sec5_H_1}, \eqref{Sec5_Holder} and \eqref{Sec5_H_2} we arrive at
\begin{align}\label{Sec5_rest_ineq}
\begin{split}
\big\|\widehat{f\sigma}\big\|_{L^{2k}(\R^d)}^{2k} & \leq (2\pi )^d\, \omega_{d-1}^{-1}\, \|f\|_{L^{2k}(\S^{d-1})}^{2k} \int_{(\S^{d-1})^k} K(\zeta_1,\ldots,\zeta_k) \,\dsigma(\zeta_1)\ldots \dsigma(\zeta_k)\\
& = (2\pi )^d\, \omega_{d-1}^{-1} \, \sigma^{(2k)}(0) \,\|f\|_{L^{2k}(\S^{d-1})}^{2k}\\
& = \omega_{d-1}^{-1} \, \|\widehat{\sigma}\|_{L^{2k}(\R^d)}^{2k} \,\|f\|_{L^{2k}(\S^{d-1})}^{2k},
\end{split}
\end{align}
which is our desired sharp inequality. 

\subsection{Cases of equality} In order to have nontrivial equality in \eqref{Sec5_rest_ineq}, on top of condition \eqref{Sec5_equality}, we must have equality in \eqref{Sec5_Holder} for $\sigma^k-$a.e. $(\zeta_1, \ldots, \zeta_k) \in (\S^{d-1})^k$. This implies that for $\sigma^k-$a.e. $(\zeta_1, \ldots, \zeta_k) \in (\S^{d-1})^k$ we must have
\begin{equation}\label{Sec5_Holder_cond_eq}
a_1\, |f(R\zeta_1)| = a_2\, |f(R\zeta_2)|  = \ldots = a_k\, |f(R\zeta_k)|,
\end{equation}
for $\mu-$a.e. $R \in SO(d)$, where $a_j >0$ for $1 \leq j \leq k$. If we integrate \eqref{Sec5_Holder_cond_eq} over the group of rotations $SO(d)$ we see that the $a_j$'s must be equal. Using \eqref{Sec5_equality} we claim that $\nu \in \C^d$ must be purely imaginary. If not, we could take a point $(\zeta_1, \ldots, \zeta_k) \in (\S^{d-1})^k$ for which \eqref{Sec5_Holder_cond_eq} holds for $\mu-$a.e. $R \in SO(d)$, with $\zeta_1$ close to $\Re(\nu)/|\Re(\nu)|$ and $\zeta_2$ close to being perpendicular to $\Re(\nu)/|\Re(\nu)|$, and reach a contradiction by choosing $R$ close to the identity. This completes the proof of Theorem  \ref{Thm1} in the cases (b) and (c) (and in fact, part of the case (a)).


\section{Proof of Theorem \ref{Thm1} - case (a): the outline of Christ-Shao and Foschi}

The goal of this section is to obtain the sharp inequality
\begin{equation}\label{Sec5_eq1}
\big\|\widehat{f \sigma}\big\|_{L^{4}(\R^{d})} \leq C(d,4,2)\, \|f\|_{L^{2}(\S^{d-1})}\,,
\end{equation}
for $3 \leq d \leq 7$, and to characterize its extremizers. A simple application of H\"{o}lder's inequality then gives the corresponding sharp inequalities for the cases $q >2$, as detailed in Section \ref{Holder reduction}. 

\medskip

In the case $d=3$, Foschi \cite{F} recently obtained the sharp inequality \eqref{Sec5_eq1} by combining previous techniques developed by Christ and Shao \cite{CS, CS2} with an insighful geometric identity intrinsic to this restriction problem. Foschi characterizes the real-valued extremizers of \eqref{Sec5_eq1} and completes the characterization of the complex-valued extremizers by invoking a result of Christ and Shao \cite[Theorem 1.2]{CS2}. Here we extend this method up to dimension $d=7$ to prove the sharp inequality \eqref{Sec5_eq1}, and characterize the complex-valued extremizers via a different path, using our Theorem \ref{Sec3_Char_ext} instead (note that, in principle, the result of \cite[Theorem 1.2]{CS2} is not available for dimensions $d >3$). 

\medskip

We keep the notation as close as possible to \cite{F} to facilitate some of the references. Lemmas \ref{Sec5_Lem8.1} - \ref{Sec5_Lem10} below are derived from the works of Christ and Shao \cite{CS, CS2} and Foschi \cite{F}. The novelty here is a careful discussion of the cases of equality.

\subsection{Reduction to nonnegative functions} Recall that by Plancherel's theorem we have 
\begin{align*}
\big\|\widehat{f \sigma}\big\|_{L^4(\R^d)}^2  = (2\pi)^{d/2}\, \big\|f\sigma * f\sigma\big\|_{L^2(\R^d)}.
\end{align*}
Our first lemma reduces matters to working with nonnegative functions.
\begin{lemma}\label{Sec5_Lem8.1} 
Let $f \in L^2(\S^{d-1})$. We have
\begin{equation}\label{Sec5_Lem8.1_Eq_1}
\big\|f\sigma * f\sigma\big\|_{L^2(\R^d)} \leq  \big\||f|\sigma * |f|\sigma\big\|_{L^2(\R^d)},
\end{equation}
with equality if and only if there is a measurable function $h:\ov{B(2)} \to \C$ such that 
\begin{equation}\label{Sec5_Lem8.1_Eq_2}
 f(\zeta_1)\, f(\zeta_2) = h(\zeta_1 + \zeta_2) \,\big|f(\zeta_1)\, f(\zeta_2)\big|
\end{equation}
for $\sigma^2-$a.e. $(\zeta_1, \zeta_2) \in (\S^{d-1})^2$. 
\end{lemma}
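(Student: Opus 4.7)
The plan is to reduce everything to a pointwise triangle inequality for $(f\sigma * f\sigma)(\xi)$ and then identify the condition for equality by the same Fubini-type bookkeeping used in Section \ref{CE}.

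First I would write the convolution via the delta-function representation \eqref{Sec2_delta_form}, so that for a.e. $\xi\in\R^d$ (where the integral is absolutely convergent),
\begin{equation*}
(f\sigma * f\sigma)(\xi) = \int_{(\S^{d-1})^2} f(\zeta_1)f(\zeta_2)\, \d\Psi_{\xi}(\zeta_1,\zeta_2),
\end{equation*}
with $\d\Psi_\xi$ as in \eqref{Def_Psi} (for $k=2$). The triangle inequality applied inside the integral immediately yields the pointwise bound $|(f\sigma * f\sigma)(\xi)| \leq (|f|\sigma * |f|\sigma)(\xi)$ for a.e.\ $\xi\in\ov{B(2)}$. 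Squaring and integrating gives \eqref{Sec5_Lem8.1_Eq_1}.

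Next I would analyze the equality case. Assume equality in \eqref{Sec5_Lem8.1_Eq_1}. Since the pointwise bound holds, equality in $L^2$ forces equality in the triangle inequality for a.e.\ $\xi$. At each such $\xi$ with $(|f|\sigma * |f|\sigma)(\xi)>0$, equality in the triangle inequality for a complex-valued integral forces the integrand $f(\zeta_1)f(\zeta_2)$ to have constant argument for $\Psi_\xi$-a.e.\ $(\zeta_1,\zeta_2)$; equivalently, there exists a unimodular number $h(\xi)$ with
\begin{equation*}
f(\zeta_1)f(\zeta_2) = h(\xi)\,|f(\zeta_1)f(\zeta_2)| \quad \text{for } \Psi_\xi\text{-a.e. } (\zeta_1,\zeta_2).
\end{equation*}
On the (Lebesgue) null set where $(|f|\sigma * |f|\sigma)(\xi)=0$ I would simply set $h(\xi)=0$. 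As in Section \ref{CE}, $h$ can be computed as the ratio $h(\xi) = (f\sigma * f\sigma)(\xi)/(|f|\sigma * |f|\sigma)(\xi)$ where the denominator is positive, hence $h$ is Lebesgue measurable on $\ov{B(2)}$. Then I would invoke the same Fubini disintegration used to prove \eqref{Sec2_Functional_Equation}: the set
\begin{equation*}
E = \bigl\{(\zeta_1,\zeta_2)\in(\S^{d-1})^2 :\ f(\zeta_1)f(\zeta_2) \neq h(\zeta_1+\zeta_2)\,|f(\zeta_1)f(\zeta_2)|\bigr\}
\end{equation*}
satisfies $\sigma^2(E) = \int_{\R^d} \Psi_\xi(E)\,\dxi = 0$, since $\Psi_\xi(E)=0$ for a.e.\ $\xi$. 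This gives exactly \eqref{Sec5_Lem8.1_Eq_2}.

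For the converse, if \eqref{Sec5_Lem8.1_Eq_2} holds $\sigma^2$-a.e., then for a.e.\ $\xi$ the integrand $f(\zeta_1)f(\zeta_2)$ equals $h(\xi)|f(\zeta_1)f(\zeta_2)|$ for $\Psi_\xi$-a.e.\ pair (reversing the Fubini computation), and the triangle inequality at level $\xi$ is saturated, yielding $|(f\sigma*f\sigma)(\xi)| = (|f|\sigma*|f|\sigma)(\xi)$ a.e.; squaring and integrating gives equality in \eqref{Sec5_Lem8.1_Eq_1}. The only subtle step is the measurability of $h$ and the passage from the $\Psi_\xi$-a.e.\ condition (holding for a.e.\ $\xi$) to the $\sigma^2$-a.e.\ condition on $(\S^{d-1})^2$, but this is handled verbatim by the Fubini argument already recorded in Section \ref{CE}.
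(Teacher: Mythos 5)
Your proposal is correct and follows essentially the same route as the paper: the delta-representation \eqref{Sec2_delta_form} gives the pointwise bound $|f\sigma*f\sigma(\xi)|\leq |f|\sigma*|f|\sigma(\xi)$, equality at a.e.\ $\xi$ yields a constant phase $h(\xi)$ with respect to $\Psi_\xi$, measurability of $h$ comes from the ratio $f\sigma*f\sigma/(|f|\sigma*|f|\sigma)$, and the passage between the $\Psi_\xi$-a.e.\ and $\sigma^2$-a.e.\ statements (in both directions) is the Fubini argument of Section \ref{CE}. One harmless inaccuracy: the set where $(|f|\sigma*|f|\sigma)(\xi)=0$ need not be Lebesgue null (e.g.\ if $f$ is supported in a small cap), but this does not matter since on such $\xi$ the relation holds $\Psi_\xi$-a.e.\ trivially for any value of $h(\xi)$.
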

\begin{proof}
Recall from \eqref{Sec2_delta_form} that 
\begin{align*}
f\sigma * f\sigma(\xi) = \int_{(\S^{d-1})^2} f(\zeta_1)\, f(\zeta_2)\, \delta_d(\xi - \zeta_1 - \zeta_2)\, \dsigma(\zeta_1)\, \dsigma(\zeta_2),
\end{align*}
which implies that 
\begin{equation}\label{Sec5_red_pos_eq1}
|f\sigma * f\sigma(\xi) |\leq |f|\sigma * |f|\sigma(\xi) 
\end{equation}
for all $\xi \in \R^d$. This plainly gives \eqref{Sec5_Lem8.1_Eq_1}. 

\medskip

Assume we have equality in \eqref{Sec5_Lem8.1_Eq_1}. Then we must have equality in \eqref{Sec5_red_pos_eq1} for a.e. $\xi \in \R^d$. For each such $\xi \in \R^d$ there exists $h(\xi) \in \C$ such that 
\begin{equation}\label{Sec5_red_pos_eq2}
 f(\zeta_1)\, f(\zeta_2) = h(\xi) |f(\zeta_1)\, f(\zeta_2)|
\end{equation}
for $\Psi_{\xi}-$a.e. $(\zeta_1, \zeta_2) \in (\S^{d-1})^2$, where the singular measure $\Psi_{\xi}$ on $(\S^{d-1})^2$ is given by  
$$\d\Psi_{\xi}(\zeta_1, \zeta_2) =  \delta_d(\xi - \zeta_1 - \zeta_2)\, \dsigma(\zeta_1)\, \dsigma(\zeta_2).$$ 
By integrating with respect to $\Psi_{\xi}$ we find that 
\begin{equation}\label{Sec5_red_pos_eq3}
f\sigma * f\sigma(\xi) = h(\xi) \big(|f|\sigma * |f|\sigma(\xi)\big),
\end{equation}
and we see that $h$ is actually a measurable function. Arguing as in Section \ref{CE} we arrive at \eqref{Sec5_Lem8.1_Eq_2}.

\medskip

Conversely, if we have \eqref{Sec5_Lem8.1_Eq_2}, we may argue again as in Section \ref{CE} to conclude that for a.e. $\xi \in \R^d$ we have \eqref{Sec5_red_pos_eq2} for $\Psi_{\xi}-$a.e. $(\zeta_1, \zeta_2) \in (\S^{d-1})^2$. Then equality in \eqref{Sec5_red_pos_eq1} holds for a.e. $\xi \in \R^d$ and we have equality in \eqref{Sec5_Lem8.1_Eq_1}.
\end{proof}

By working with $|f|$ instead of $f$, {\it we may assume that we are dealing with nonnegative functions}.

\subsection{Reduction to even functions} Given a function $f: \S^{d-1} \to \R^+$ we define its {\it antipodal} $f_{\star}$ by 
\begin{equation*}
f_{\star}(\zeta) = f (-\zeta).
\end{equation*}
Using \eqref{Sec2_delta_form} we observe that 
\begin{align}\label{Sec5_Planc_star}
\begin{split}
\big\|f\sigma * f\sigma\big\|_{L^2(\R^d)}^2  & = \big\|f\sigma * f_{\star}\sigma\big\|_{L^2(\R^d)}^2\\
& = \int_{(\S^{d-1})^4} \!f(\zeta_1)\, f(-\zeta_2)\, f(\zeta_3)\, f(-\zeta_4)\,\delta_d (\zeta_1 + \zeta_2 + \zeta_3 + \zeta_4)\,\dsigma(\zeta_1)\,\dsigma(\zeta_2)\,\dsigma(\zeta_3)\,\dsigma(\zeta_4)\\ 
& = Q(f, f_{\star}, f, f_{\star}).
\end{split}
\end{align}
Here $Q$ is the quadrilinear form defined by
\begin{equation*}
Q(f_1, f_2, f_3, f_4) := \int_{(\S^{d-1})^4} f_1(\zeta_1)\, f_2(\zeta_2)\, f_3(\zeta_3)\, f_4(\zeta_4)\,\d\Sigma(\zeta),
\end{equation*}
where $\Sigma$ is the singular measure in $(\S^{d-1})^4$ given by
\begin{equation*}
\d\Sigma(\zeta) := \delta_d (\zeta_1 + \zeta_2 + \zeta_3 + \zeta_4)\,\dsigma(\zeta_1)\,\dsigma(\zeta_2)\,\dsigma(\zeta_3)\,\dsigma(\zeta_4),
\end{equation*}
for $\zeta = (\zeta_1, \zeta_2, \zeta_3, \zeta_4) \in (\S^{d-1})^4$. Note that $\Sigma$ is supported on the set $\{\zeta \in (\S^{d-1})^4;\ \zeta_1 + \zeta_2 + \zeta_3 + \zeta_4 = 0\}$.

\medskip

For $f: \S^{d-1} \to \R^+$, we define the {\it antipodal symmetrization} $f_{\sharp}$ by 
\begin{equation*}
f_{\sharp}(\zeta) = \sqrt{ \frac{f(\zeta)^2 + f(-\zeta)^2}{2}}.
\end{equation*}
Note that $\|f_{\sharp}\|_{L^2(\R^d)} = \|f\|_{L^2(\R^d)}$.
\begin{lemma}[cf. \cite{CS, F}] \label{Sec5_Lem8}
Let $d \geq 3$. If $f: \S^{d-1} \to \R^+$ belongs to $L^2(\S^{d-1})$ then
\begin{equation}\label{Sec5_ineq_Q}
Q(f, f_{\star}, f, f_{\star}) \leq Q(f_{\sharp}, f_{\sharp}, f_{\sharp}, f_{\sharp}),
\end{equation}
with equality if and only if $f= f_{\star} = f_{\sharp}$ {\rm (}$\sigma-$a.e. in $\S^{d-1}${\rm)}.
\end{lemma}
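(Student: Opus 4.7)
My plan is a two-step symmetrization that exploits the full $S_4$-invariance of the singular measure $\Sigma$ --- manifest from the fully symmetric expression $\d\Sigma = \delta_d(\zeta_1+\zeta_2+\zeta_3+\zeta_4)\,\dsigma(\zeta_1)\dsigma(\zeta_2)\dsigma(\zeta_3)\dsigma(\zeta_4)$ --- together with the pointwise Cauchy--Schwarz inequality
\begin{equation} \label{eq:plan_pointwise}
f(\zeta_1)\, f_\star(\zeta_2) + f_\star(\zeta_1)\, f(\zeta_2) \leq 2 f_\sharp(\zeta_1) f_\sharp(\zeta_2), \qquad \zeta_1, \zeta_2 \in \S^{d-1}.
\end{equation}
This follows from $(ac+bd)^2 \leq (a^2+b^2)(c^2+d^2)$ applied to $(a,b) = (f(\zeta_1), f_\star(\zeta_1))$ and $(c,d) = (f_\star(\zeta_2), f(\zeta_2))$, together with the identity $2 f_\sharp(\zeta)^2 = f(\zeta)^2 + f_\star(\zeta)^2$; equality in \eqref{eq:plan_pointwise} holds if and only if $f(\zeta_1) f(\zeta_2) = f_\star(\zeta_1) f_\star(\zeta_2)$.

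For the inequality itself, I would first relabel $\zeta_1 \leftrightarrow \zeta_2$ inside $Q(f, f_\star, f, f_\star)$ and use the $S_4$-invariance of $\Sigma$ to obtain $Q(f, f_\star, f, f_\star) = Q(f_\star, f, f, f_\star)$. Summing both and bounding the combined integrand via \eqref{eq:plan_pointwise} against the nonnegative weight $f(\zeta_3) f_\star(\zeta_4)\,\d\Sigma$ produces
\begin{equation*}
2 Q(f, f_\star, f, f_\star) = \int \bigl[f(\zeta_1) f_\star(\zeta_2) + f_\star(\zeta_1) f(\zeta_2)\bigr]\, f(\zeta_3) f_\star(\zeta_4) \,\d\Sigma \leq 2 Q(f_\sharp, f_\sharp, f, f_\star).
\end{equation*}
An identical doubling maneuver on slots $(3,4)$ --- exploiting $Q(f_\sharp, f_\sharp, f, f_\star) = Q(f_\sharp, f_\sharp, f_\star, f)$ and applying \eqref{eq:plan_pointwise} against the weight $f_\sharp(\zeta_1) f_\sharp(\zeta_2)\,\d\Sigma$ --- delivers $Q(f_\sharp, f_\sharp, f, f_\star) \leq Q(f_\sharp, f_\sharp, f_\sharp, f_\sharp)$. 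Chaining the two estimates yields \eqref{Sec5_ineq_Q}.

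For the equality analysis, a saturation of \eqref{Sec5_ineq_Q} forces both applications of \eqref{eq:plan_pointwise} to be tight on the supports of their respective weights. Projecting down from a $\Sigma$-a.e.\ statement to a $\sigma^2$-a.e.\ statement via Fubini, and using that the pushforward of $\Sigma$ onto the $(\zeta_1, \zeta_2)$-plane is $(\sigma*\sigma)(\zeta_1+\zeta_2)\,\dsigma(\zeta_1)\dsigma(\zeta_2)$ --- positive off the antipodal diagonal by Lemma \ref{biconvsigma} --- and combining with the analogous relation from the second Cauchy--Schwarz step, I would deduce, unless $f$ is $\sigma$-a.e.\ zero, that $f(\zeta_1) f(\zeta_2) = f_\star(\zeta_1) f_\star(\zeta_2)$ for $\sigma^2$-a.e.\ $(\zeta_1, \zeta_2)$. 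Fixing $\zeta_0 \in \S^{d-1}$ with $f(\zeta_0) > 0$ then forces $f = c\,f_\star$ a.e.\ for a constant $c \geq 0$; substituting $\zeta \leftrightarrow -\zeta$ gives $f_\star = c\,f$, so $c^2 = 1$, hence $c = 1$ and $f = f_\star = f_\sharp$. The main technical obstacle is precisely this descent from $\Sigma$-a.e.\ conditions on four-tuples to $\sigma^2$-a.e.\ conditions on pairs: it requires propagating the zero-set information through the various convolution weights, and it is here that the hypothesis $d \geq 3$ enters, ensuring the constraint fibers are positive-dimensional and the relevant convolutions have sufficiently large supports.
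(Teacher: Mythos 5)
The inequality half of your proposal is correct and rests on exactly the paper's key ingredient, the two-point Cauchy--Schwarz bound \eqref{Sec5_CS}: your two successive ``doubling'' steps inside $Q$ are just a re-bookkeeping of the paper's argument, which proves the pointwise bound $f\sigma*f_\star\sigma(\xi)\le f_\sharp\sigma*f_\sharp\sigma(\xi)$ for every $\xi$ (i.e.\ \eqref{Sec5_f_sharp_ineq}) and then squares and integrates, using $Q(f,f_\star,f,f_\star)=\|f\sigma*f_\star\sigma\|_{L^2(\R^d)}^2$ from \eqref{Sec5_Planc_star}. So far, same approach.

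The equality analysis, however, has a genuine gap exactly at the step you flag. Saturation of your first inequality only gives, for $\Sigma$-a.e.\ $(\zeta_1,\zeta_2,\zeta_3,\zeta_4)$, that $\bigl[2f_\sharp(\zeta_1)f_\sharp(\zeta_2)-f(\zeta_1)f_\star(\zeta_2)-f_\star(\zeta_1)f(\zeta_2)\bigr]\,f(\zeta_3)f_\star(\zeta_4)=0$, and similarly for the second step with the weight $f_\sharp(\zeta_1)f_\sharp(\zeta_2)$. Thus the bilinear antipodal relation is forced only at points of each fiber where the companion weight is positive, and after projecting you obtain $f(\zeta_1)f(\zeta_2)=f_\star(\zeta_1)f_\star(\zeta_2)$ only for pairs whose sum lies in $-\supp\big(\chi_{\{f>0\}}\sigma*\chi_{\{f_\star>0\}}\sigma\big)$ (resp.\ the analogous set with $f_\sharp$). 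Nothing forces this set to exhaust $B(2)$: if, say, $\{f_\sharp>0\}$ is a pair of antipodal caps, that support is a small subset of $\ov{B(2)}$, and positive-dimensionality of the fibers for $d\ge3$ does not help, since the weight may vanish on essentially an entire fiber; so your assertion ``unless $f$ is a.e.\ zero, the relation holds $\sigma^2$-a.e.'' does not follow without an additional bootstrapping/sumset argument, which you have not supplied (and the attribution of the hypothesis $d\ge 3$ to this point is not where the paper uses it). The paper avoids the issue by symmetrizing at the bilinear level: equality in \eqref{Sec5_ineq_Q} plus the pointwise bound \eqref{Sec5_f_sharp_ineq} between nonnegative functions forces $f\sigma*f_\star\sigma(\xi)=f_\sharp\sigma*f_\sharp\sigma(\xi)$ for a.e.\ $\xi$; on each such fiber the Cauchy--Schwarz saturation is \emph{unweighted}, giving \eqref{Sec5_cond_1} for $\Psi_\xi$-a.e.\ pair, and the delta/Fubini argument of Section \ref{CE} upgrades this to $\sigma^2$-a.e.\ with no support issues; then fixing a good $\zeta_1$ and integrating in $\zeta_2$ (or your proportionality argument with $c^2=1$) yields $f=f_\star=f_\sharp$. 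Either adopt that bilinear organization, or supply the missing propagation-of-support argument; as written, the equality case is incomplete.
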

\begin{proof}
We follow \cite[Proposition 3.2]{F}. Observe first that 
\begin{equation}\label{Sec5_f_sharp_ineq}
f * f_{\star}(\xi) \leq f_{\sharp} * f_{\sharp}(\xi)
\end{equation}
for all $\xi \in \R^d$. In fact, we have
\begin{align}\label{Sec5_CS0}
\begin{split}
2 f * f_{\star}(\xi) &= f * f_{\star}(\xi) + f_{\star}* f(\xi)\\
& = \int_{(\S^{d-1})^2}\Big[ f(\zeta_1) f(-\zeta_2) + f(-\zeta_1)f(\zeta_2) \Big]\, \delta_d( \xi - \zeta_1 - \zeta_2)\,\dsigma(\zeta_1)\,\dsigma(\zeta_2).
\end{split}
\end{align}
By Cauchy-Schwarz inequality we have
\begin{equation}\label{Sec5_CS}
\Big[ f(\zeta_1) f(-\zeta_2) + f(-\zeta_1)f(\zeta_2) \Big]  \leq \sqrt{f(\zeta_1)^2 + f(-\zeta_1)^2} \ \sqrt{f(\zeta_2)^2 + f(-\zeta_2)^2} = 2 \,f_{\sharp}(\zeta_1)\,f_{\sharp}(\zeta_2).
\end{equation}
Plugging \eqref{Sec5_CS} into \eqref{Sec5_CS0} we obtain \eqref{Sec5_f_sharp_ineq}. Now observe that \eqref{Sec5_Planc_star} and \eqref{Sec5_f_sharp_ineq} plainly imply \eqref{Sec5_ineq_Q}.

\medskip

In order to have equality in \eqref{Sec5_ineq_Q}, we must have equality in \eqref{Sec5_f_sharp_ineq} for a.e. $\xi \in \R^d$. For each such $\xi \in \R^d$, the condition of equality in the Cauchy-Schwarz inequality \eqref{Sec5_CS} gives us that 
\begin{equation}\label{Sec5_cond_1}
f(\zeta_1)\, f(\zeta_2) = f(-\zeta_1)\, f(-\zeta_2)
\end{equation}
for $\Psi_{\xi}-$a.e. $(\zeta_1, \zeta_2) \in (\S^{d-1})^2$. Arguing as in Section \ref{CE}, this implies that \eqref{Sec5_cond_1} must hold for $\sigma^2-$a.e. $(\zeta_1, \zeta_2) \in (\S^{d-1})^2$. Let $\zeta_1 \in \S^{d-1}$ be such that \eqref{Sec5_cond_1} holds for $\sigma-$a.e. $\zeta_2 \in \S^{d-1}$. Then we can integrate over $\S^{d-1}$ with respect to the variable $\zeta_2$ to obtain (provided $f$ is nonzero, otherwise the result is trivial)
\begin{equation*}
f(\zeta_1) = f(-\zeta_1).
\end{equation*}
This shows that $f= f_{\star} = f_{\sharp}$ {\rm (}$\sigma-$a.e. in $\S^{d-1}${\rm)}.
\end{proof}

From now on {\it we may assume additionally that $f = f_{\sharp}$}.

\subsection{The key geometric identity} The heart of Foschi's proof lies in the following simple geometric identity.

\begin{lemma}{\rm (cf. \cite[Lemma 4.2]{F})}
Let $(\zeta_1, \zeta_2, \zeta_3, \zeta_4) \in (\S^{d-1})^4$ be such that
\begin{equation*}
\zeta_1 + \zeta_2 + \zeta_3 + \zeta_4 = 0
\end{equation*}
{\rm (}i.e. $(\zeta_1, \zeta_2, \zeta_3, \zeta_4) $ lies in the support of the measure $\Sigma${\rm )}. Then
\begin{equation}\label{Sec5_magical_identity}
|\zeta_1 + \zeta_2|\,|\zeta_3 + \zeta_4| + |\zeta_1 + \zeta_3|\,|\zeta_2 + \zeta_4| + |\zeta_1 + \zeta_4|\,|\zeta_2 + \zeta_3| = 4.
\end{equation}
\end{lemma}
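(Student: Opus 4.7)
The plan is to exploit the constraint $\zeta_1+\zeta_2+\zeta_3+\zeta_4=0$ to rewrite each of the three products as a single squared norm, then expand and collapse using $|\zeta_j|=1$ and the constraint one more time.

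First I would observe that the vanishing-sum condition yields the three "antipodal" identifications
\[
\zeta_3+\zeta_4=-(\zeta_1+\zeta_2),\quad \zeta_2+\zeta_4=-(\zeta_1+\zeta_3),\quad \zeta_2+\zeta_3=-(\zeta_1+\zeta_4),
\]
so in particular $|\zeta_1+\zeta_2|=|\zeta_3+\zeta_4|$, $|\zeta_1+\zeta_3|=|\zeta_2+\zeta_4|$, and $|\zeta_1+\zeta_4|=|\zeta_2+\zeta_3|$. The left-hand side of \eqref{Sec5_magical_identity} thus reduces to
\[
|\zeta_1+\zeta_2|^2+|\zeta_1+\zeta_3|^2+|\zeta_1+\zeta_4|^2.
\]

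Next I would expand each squared norm via $|\zeta_1+\zeta_j|^2=|\zeta_1|^2+2\,\zeta_1\cdot\zeta_j+|\zeta_j|^2=2+2\,\zeta_1\cdot\zeta_j$, using that all $\zeta_j$ lie on $\S^{d-1}$. Summing over $j=2,3,4$ gives
\[
6+2\,\zeta_1\cdot(\zeta_2+\zeta_3+\zeta_4)=6+2\,\zeta_1\cdot(-\zeta_1)=6-2=4,
\]
which is the claimed identity. There is no real obstacle here: the only "trick" is recognizing that the pairwise sums come in antipodal pairs, which converts the geometric means into squared norms and lets the constraint be applied twice (once to pair up the three terms and once to eliminate $\zeta_2+\zeta_3+\zeta_4$). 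The whole proof fits in two lines.
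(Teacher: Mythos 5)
Your proof is correct, and it is precisely the argument behind Foschi's Lemma 4.2, which the paper cites without reproducing a proof: the vanishing sum pairs the three products into squares $|\zeta_1+\zeta_j|^2=2+2\,\zeta_1\cdot\zeta_j$, and summing over $j=2,3,4$ with $\zeta_2+\zeta_3+\zeta_4=-\zeta_1$ gives $6-2=4$. Nothing is missing.
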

The kernel in our Corollary \ref{Cor3} is too singular to allow us to draw any sharp global conclusions about the adjoint restriction inequality \eqref{Sec5_eq1}. To overcome this barrier we use the identity \eqref{Sec5_magical_identity} and the symmetries of $Q$ in order to write
\begin{equation}\label{Sec5_Id_Q}
Q(f,f,f,f) = \frac{3}{4}  \int_{(\S^{d-1})^4} f(\zeta_1)\, f(\zeta_2)\, |\zeta_1 + \zeta_2|\, f(\zeta_3)\, f(\zeta_4)\,|\zeta_3 + \zeta_4|\,\d\Sigma(\zeta).
\end{equation}
This allows us to prove the following lemma.
\begin{lemma}\label{Sec5_Lem10}
Let $d \geq 3$. If $f:\S^{d-1} \to \R^{+}$ is an even function in $L^2(\S^{d-1})$ then
\begin{equation*}
Q(f,f,f,f) \leq 2^{-d+3}\,\omega_{d-2}\, \frac{3}{4} \,\int_{(\S^{d-1})^2} f(\zeta_1)^2\, f(\zeta_2)^2\, |\zeta_1 + \zeta_2|\,\big(4-|\zeta_1 + \zeta_2|^2 \big)^{\frac{d-3}{2}}\,\dsigma(\zeta_1) \, \dsigma (\zeta_2),
\end{equation*}
with equality if and only if $f$ is a constant function.
\end{lemma}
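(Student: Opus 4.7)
The plan is to apply the Cauchy-Schwarz inequality to the symmetrized form \eqref{Sec5_Id_Q} with respect to the measure $\d\Sigma$, viewing the integrand as a product of two factors that depend on disjoint pairs of variables. Setting $F(\eta_1,\eta_2):= f(\eta_1)\,f(\eta_2)\,|\eta_1+\eta_2|$, one rewrites
$$Q(f,f,f,f) = \tfrac{3}{4}\int_{(\S^{d-1})^4} F(\zeta_1,\zeta_2)\,F(\zeta_3,\zeta_4)\,\d\Sigma(\zeta).$$
Cauchy-Schwarz together with the invariance of $\Sigma$ under the permutation exchanging $(\zeta_1,\zeta_2)$ with $(\zeta_3,\zeta_4)$ yields the cleaner bound
$$Q(f,f,f,f) \leq \tfrac{3}{4}\int_{(\S^{d-1})^4} F(\zeta_1,\zeta_2)^2\,\d\Sigma(\zeta).$$

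Next I would integrate out $\zeta_3$ and $\zeta_4$. The reflection invariance of the sphere (substitute $\zeta_j \mapsto -\zeta_j$ for $j=3,4$) collapses $\int \delta_d(\zeta_1+\zeta_2+\zeta_3+\zeta_4)\,\dsigma(\zeta_3)\,\dsigma(\zeta_4)$ into $\sigma\ast\sigma(\zeta_1+\zeta_2)$, and Lemma \ref{biconvsigma} then produces the explicit factor $2^{-d+3}\omega_{d-2}\,|\zeta_1+\zeta_2|^{-1}(4-|\zeta_1+\zeta_2|^2)_+^{(d-3)/2}$. The $|\zeta_1+\zeta_2|^2$ contained in $F^2$ cancels the $|\zeta_1+\zeta_2|^{-1}$ from the convolution kernel to leave a single $|\zeta_1+\zeta_2|$, producing precisely the right-hand side stated in the lemma.

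For the equality analysis, Cauchy-Schwarz is tight precisely when $F(\zeta_1,\zeta_2) = \lambda\,F(\zeta_3,\zeta_4)$ for $\Sigma$-a.e. quadruple; the pair-swap symmetry together with $F\geq 0$ forces $\lambda=1$. Since $|\zeta_3+\zeta_4| = |\zeta_1+\zeta_2|$ on $\supp\Sigma$, the equality condition reduces to $f(\zeta_1)f(\zeta_2) = f(\zeta_3)f(\zeta_4)$ whenever $\zeta_1+\zeta_2 = -(\zeta_3+\zeta_4)$. Using $f=f_\star$ to substitute $\eta_j=-\zeta_j$ for $j=3,4$ recasts this as the Cauchy-Pexider equation $f(\zeta_1)f(\zeta_2) = h(\zeta_1+\zeta_2)$ holding $\sigma^2$-a.e., with a suitable measurable $h$. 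Since $k=2$ and $d\geq 3$, Theorem \ref{Sec3_Char_ext} applies and (in the nontrivial case $f\not\equiv 0$) delivers $f(\zeta) = c\,e^{\nu\cdot\zeta}$ for some $c\in\C\setminus\{0\}$ and $\nu\in\C^d$.

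The step I expect to require the most care is extracting ``constant'' from this exponential form. Positivity of $f$ first forces $c>0$ and $\nu\in\R^d$ (any nonzero imaginary part of $\nu$ would make $c\,e^{\nu\cdot\zeta}$ genuinely complex-valued on a non-null subset of $\S^{d-1}$). The evenness $f=f_\star$ then gives $e^{\nu\cdot\zeta}=e^{-\nu\cdot\zeta}$ for every $\zeta\in\S^{d-1}$, so $\nu\cdot\zeta\equiv 0$ on the sphere and hence $\nu=0$; thus $f$ is constant. Conversely, for constant $f$ the two factors $F(\zeta_1,\zeta_2)$ and $F(\zeta_3,\zeta_4)$ depend only on $|\zeta_1+\zeta_2|$, which agrees with $|\zeta_3+\zeta_4|$ on $\supp\Sigma$, so the Cauchy-Schwarz step is in fact an equality and no loss is incurred, completing the characterization.
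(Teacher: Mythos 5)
Your proposal is correct and follows essentially the same route as the paper's proof: Cauchy--Schwarz with respect to $\d\Sigma$ applied to the symmetrized identity \eqref{Sec5_Id_Q}, integration of $\zeta_3,\zeta_4$ via Lemma \ref{biconvsigma} to produce the stated kernel, and, for the equality case, reduction (using $|\zeta_3+\zeta_4|=|\zeta_1+\zeta_2|$ on $\supp\Sigma$ and evenness of $f$) to the Cauchy--Pexider equation handled by Theorem \ref{Sec3_Char_ext}, after which real-valuedness, positivity and evenness force $f(\zeta)=c\,e^{\nu\cdot\zeta}$ with $\nu=0$. The only cosmetic difference is how the Cauchy--Schwarz constant is pinned down ($\lambda=1$ by pair-swap symmetry versus integrating against $\Sigma$) and that the paper constructs the measurable $h$ explicitly by averaging against $\d\Psi_{\zeta_1+\zeta_2}$, a step you invoke implicitly.
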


\begin{proof} We use the Cauchy-Schwarz inequality in \eqref{Sec5_Id_Q}  (with respect to the measure $\Sigma$), together with Lemma \ref{biconvsigma} to get
\begin{align*}
\begin{split}
Q(f,f,f,f) &\leq \frac{3}{4} \left(\int_{(\S^{d-1})^4} f(\zeta_1)^2\, f(\zeta_2)^2\, |\zeta_1 + \zeta_2|^2\,\d\Sigma(\zeta)\right)
^{1/2} \left(\int_{(\S^{d-1})^4} f(\zeta_3)^2\, f(\zeta_4)^2\, |\zeta_3 + \zeta_4|^2\,\d\Sigma(\zeta)\right)
^{1/2}\\
& = \frac{3}{4} \int_{(\S^{d-1})^4} f(\zeta_1)^2\, f(\zeta_2)^2\, |\zeta_1 + \zeta_2|^2\,\d\Sigma(\zeta)\\
& = 2^{-d+3}\,\omega_{d-2}\, \frac{3}{4} \,\int_{(\S^{d-1})^2} f(\zeta_1)^2\, f(\zeta_2)^2\, |\zeta_1 + \zeta_2|\,\big(4-|\zeta_1 + \zeta_2|^2 \big)^{\frac{d-3}{2}}\,\dsigma(\zeta_1) \, \dsigma (\zeta_2).
\end{split}
\end{align*}
In order to have equality we must have
\begin{equation*}
f(\zeta_1)\, f(\zeta_2)\, |\zeta_1 + \zeta_2| =  c \, f(\zeta_3)\, f(\zeta_4)\,|\zeta_3 + \zeta_4|
\end{equation*}
for some $c \in \R$ and $\Sigma-$a.e. $(\zeta_1, \zeta_2, \zeta_3, \zeta_4) \in (\S^{d-1})^4$. Integrating both sides with respect to $\d\Sigma(\zeta_1, \zeta_2, \zeta_3, \zeta_4)$ gives us that $c = 1$ and thus 
\begin{equation}\label{Sec5_Lem10_Eq_cond}
f(\zeta_1)\, f(\zeta_2)\, |\zeta_1 + \zeta_2| =  f(\zeta_3)\, f(\zeta_4)\,|\zeta_3 + \zeta_4|
\end{equation}
for $\Sigma-$a.e. $(\zeta_1, \zeta_2, \zeta_3, \zeta_4) \in (\S^{d-1})^4$. Let 
\begin{equation}\label{Sec5_Def_E}
E = \big\{(\zeta_1, \zeta_2, \zeta_3, \zeta_4) \in (\S^{d-1})^4;\ \eqref{Sec5_Lem10_Eq_cond}\ {\rm does \ not\ hold}\}.
\end{equation}
We find that 
\begin{align*}
0 &= \int_{(\S^{d-1})^4} \chi_E(\zeta_1, \zeta_2, \zeta_3, \zeta_4)\, \d\Sigma(\zeta_1, \zeta_2, \zeta_3, \zeta_4) \\
& =  \int_{(\S^{d-1})^2} \left( \int_{(\S^{d-1})^2} \chi_E(\zeta_1, \zeta_2, -\zeta_3, -\zeta_4) \,\delta_d(\zeta_1 + \zeta_2 - \zeta_3 - \zeta_4)\,\dsigma(\zeta_3)\, \dsigma(\zeta_4)\right) \dsigma(\zeta_1)\,\dsigma(\zeta_2).
\end{align*}
Thus, for $\sigma^2-$a.e. $(\zeta_1, \zeta_2) \in (\S^{d-1})^2$, we have that (recall that $f$ is even)
\begin{equation}\label{Sec5_Lem10_Eq_cond_2}
f(\zeta_1)\, f(\zeta_2)\, |\zeta_1 + \zeta_2| =  f(\zeta_3)\, f(\zeta_4)\,|\zeta_3 + \zeta_4|
\end{equation}
for $\Psi_{\zeta_1 + \zeta_2}-$a.e. $(\zeta_3, \zeta_4) \in (\S^{d-1})^2$. For such a $(\zeta_1, \zeta_2) \in (\S^{d-1})^2$, with $\zeta_1 + \zeta_2 \in B(2)\setminus \{0\}$ we have
\begin{equation}\label{Sec5_Lem10_Eq_cond_3}
f(\zeta_1)\, f(\zeta_2) =  f(\zeta_3)\, f(\zeta_4)
\end{equation}
for $\Psi_{\zeta_1 + \zeta_2}-$a.e. $(\zeta_3, \zeta_4) \in (\S^{d-1})^2$, and if we average the right-hand side of \eqref{Sec5_Lem10_Eq_cond_3} with respect to $\d\Psi_{\zeta_1 + \zeta_2}(\zeta_3,\zeta_4)$ we arrive at
\begin{align*}
f(\zeta_1)\, f(\zeta_2) &= \frac{ \int_{(\S^{d-1})^2} f(\zeta_3)\, f(\zeta_4) \,\d\Psi_{\zeta_1 + \zeta_2}(\zeta_3,\zeta_4)}{ \int_{(\S^{d-1})^2}\d\Psi_{\zeta_1 + \zeta_2}(\zeta_3,\zeta_4)}=  \frac{f\sigma*f\sigma(\zeta_1 + \zeta_2)}{\sigma^{(2)}(\zeta_1 + \zeta_2)}=:h(\zeta_1 + \zeta_2).
\end{align*}

\medskip

We now use Theorem \ref{Sec3_Char_ext} to conclude that $f(\zeta) = c\, e^{\nu\cdot \zeta}$ for some $c \in \C$ and $\nu \in \C^d$. If $c = 0$ we are done. If $c \neq 0$, since $f$ is real-valued we must have $\Im(\nu) =0$, and since $f$ is even we must have $\Re(\nu) = 0$. Then, since $f$ is nonnegative, we must have $c>0$ and $f(\zeta) = c$.

\medskip

Conversely, it is clear that any (nonnegative) constant function verifies the desired equality. This concludes the proof.
\end{proof}

\subsection{Proof of Theorem \ref{Thm1} - case (a)} We consider the quadratic form 
\begin{equation*}
H_d(g) := \int_{(\S^{d-1})^2} \ov{g(\zeta_1)}\, g(\zeta_2)\, |\zeta_1 - \zeta_2|\,\big(4-|\zeta_1 -\zeta_2|^2 \big)^{\frac{d-3}{2}}\,\dsigma(\zeta_1) \, \dsigma (\zeta_2).
\end{equation*}
This is a real-valued and continuous functional on $L^1(\S^{d-1})$. In fact, it is not hard to see that 
\begin{equation}\label{Sec5_Cont_H_d}
|H_d(g_1) - H_d(g_2)| \leq 2^{d-2}\,  \Big(\|g_1\|_{L^1(\S^{d-1})} + \|g_2\|_{L^1(\S^{d-1})}\Big)\, \|g_1 - g_2\|_{L^1(\S^{d-1})}.
\end{equation}
The next lemma is the last piece of information needed for our sharp inequality.

\begin{lemma}\label{Sec5_Lem11}
Let $3 \leq d \leq 7$. Let $g \in L^1(\S^{d-1})$ be an even function and write
\begin{equation*}
\mu = \frac{1}{\omega_{d-1}} \int_{\S^{d-1}} g(\zeta)\,\dsigma(\zeta)
\end{equation*}
for the mean value of $g$ over the sphere $\S^{d-1}$. Then 
\begin{equation*}
H_d(g) \leq H_d(\mu {\bf 1}) = |\mu|^2 H_d({\bf 1}),
\end{equation*}
with equality if and only if $g$ is a constant function.
\end{lemma}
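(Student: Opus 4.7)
The plan is to diagonalize the quadratic form $H_d$ by spectral decomposition in spherical harmonics combined with the Funk--Hecke formula, in the spirit of Foschi's treatment of the case $d=3$. The kernel of $H_d$ is zonal: using $4 - |\zeta_1 - \zeta_2|^2 = 2(1 + \zeta_1 \cdot \zeta_2)$ and $|\zeta_1 - \zeta_2| = \sqrt{2}\,(1 - \zeta_1 \cdot \zeta_2)^{1/2}$, it equals $K(\zeta_1 \cdot \zeta_2)$ with
\begin{equation*}
K(t) = 2^{(d-2)/2} (1-t)^{1/2} (1+t)^{(d-3)/2}.
\end{equation*}
Writing $g = \mu\mathbf{1} + h$ with $h$ of mean zero, the two cross-terms in $H_d$ vanish because $\int_{\S^{d-1}} K(\zeta_1 \cdot \zeta_2)\, \dsigma(\zeta_1)$ is constant in $\zeta_2$ (it equals the zeroth Funk--Hecke eigenvalue, $H_d(\mathbf{1})/\omega_{d-1}$), so that $H_d(g) = |\mu|^2 H_d(\mathbf{1}) + H_d(h)$. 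The lemma therefore reduces to showing $H_d(h) \leq 0$, with equality if and only if $h \equiv 0$, for every even mean-zero $h$.

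\medskip

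Next, I would expand $h = \sum_{n} h_n$ into its pairwise $L^2$-orthogonal spherical-harmonic components; evenness kills every odd $n$ and the mean-zero hypothesis kills $n=0$, so only even $n \geq 2$ contribute. Applying the Funk--Hecke formula to the zonal kernel $K$ diagonalizes $H_d$ as
\begin{equation*}
H_d(h) = \sum_{\substack{n \geq 2 \\ n \text{ even}}} \lambda_n\, \|h_n\|_{L^2(\S^{d-1})}^2, \qquad \lambda_n = \omega_{d-2}\, 2^{(d-2)/2}\!\int_{-1}^1 (1-t)^{(d-2)/2}(1+t)^{d-3}\, \frac{C_n^{(d-2)/2}(t)}{C_n^{(d-2)/2}(1)}\, \dt.
\end{equation*}
The task then reduces to proving that $\lambda_n < 0$ for every even $n \geq 2$ whenever $3 \leq d \leq 7$; under this negativity, equality in the lemma forces $h_n = 0$ for every such $n$, hence $h \equiv 0$ and $g = \mu \mathbf{1}$. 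The $L^1$ statement follows from the $L^2$ statement by density and the continuity estimate for $H_d$ displayed just before the lemma.

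\medskip

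The main obstacle is the uniform negativity of the eigenvalues $\lambda_n$. For $n = 2$ one can evaluate by hand: using $C_2^{(d-2)/2}(t) = \tfrac{d-2}{2}(dt^2 - 1)$ and expanding as a linear combination of three Beta integrals yields, after simplification,
\begin{equation*}
\lambda_2 = c_d \cdot \frac{d^3 - 11 d^2 + 26 d - 8}{(3d-4)(3d-2)}
\end{equation*}
for a positive constant $c_d$. The cubic $p(d) = d^3 - 11 d^2 + 26 d - 8$ is negative at $d \in \{3,4,5,6,7\}$ and positive for $d \geq 8$, which pinpoints precisely the dimension threshold of case (a) of Theorem \ref{Thm1} and explains why parts (b) and (c) require a different method. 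For general even $n \geq 2$, I would evaluate $\lambda_n$ via the Rodrigues formula for Gegenbauer polynomials (equivalently, through the hypergeometric form of the resulting Jacobi-type integral) and verify the sign, combining a finite check for small $n$ with a monotonicity or decay argument for large $n$. Making this negativity uniform in the five admissible dimensions is the technical heart of the proof.
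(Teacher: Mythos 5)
Your reduction is exactly the paper's: the kernel is zonal with profile $\phi_d(t)=2^{(d-2)/2}(1-t)^{1/2}(1+t)^{(d-3)/2}$, the Funk--Hecke formula diagonalizes $H_d$ over spherical harmonics with eigenvalues $\Lambda_n(\phi_d)=\omega_{d-2}\int_{-1}^1\frac{C_n^{(d-2)/2}(t)}{C_n^{(d-2)/2}(1)}\,\phi_d(t)(1-t^2)^{\frac{d-3}{2}}\,\dt$ (your $\lambda_n$ agrees with this), evenness of $g$ removes the odd harmonics, and the $L^1$ case is handled by truncation plus the continuity bound for $H_d$. Your explicit computation of $\lambda_2$ is correct, and it is a nice observation that the cubic $d^3-11d^2+26d-8$ changes sign precisely at $d=8$, matching the paper's remark on why the method stops at $d=7$. (One small caveat on the $L^1$ step: for the \emph{equality} characterization, density alone is not quite enough; the paper also uses that the harmonic coefficients $\langle h_N,Z_{kj}\rangle$ of the truncations converge to those of $h$, so a nonzero coefficient with $k\ge 2$ produces a fixed negative deficit that survives the limit.)

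The genuine gap is the core eigenvalue estimate: you never prove that $\lambda_n<0$ for \emph{all} even $n\geq 2$ when $3\leq d\leq 7$; you verify only $n=2$ and then defer to an unexecuted plan (``Rodrigues formula \ldots finite check for small $n$ \ldots monotonicity or decay argument for large $n$''). As stated this is not an argument: decay of $|\lambda_n|$ carries no sign information, and no monotonicity statement is formulated, let alone proved --- nor is it obvious that one holds uniformly in the five dimensions. This is precisely where the paper does its real work (Proposition \ref{Prop14} and Lemma \ref{Lem13}): using the three-term recurrence \eqref{Sec6_Prop14_eq1}, the identities $R_k=P_{k+1}'$, $S_k=\tfrac12 Q_{k+1}'$, $3T_k=P_{k+2}''$, and the explicitly computed moments $\gamma_k$, $\delta_k$, $\tau_k$, it derives closed-form rational expressions for $\Lambda_k(\phi_d)$ in each dimension $d=4,5,6,7$ (for example, for $d=5$ the eigenvalue is a positive multiple of $(3-3k-k^2)$ divided by a product of positive odd factors), from which negativity for $k\geq 2$ is read off at a glance; the case $d=3$ is quoted from Foschi. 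Without an analogous closed-form evaluation (or some other uniform sign argument) for every even $n\geq 2$, your proof of the lemma is incomplete at its technical heart.
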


Assume for a moment that we have proved this lemma and let us conclude the proof of Theorem \ref{Thm1}.

\begin{proof}[Proof of Theorem \ref{Thm1} - case {\rm (a)}] Putting together our chain of inequalities (Lemmas \ref{Sec5_Lem8.1}, \ref{Sec5_Lem8}, \ref{Sec5_Lem10} and \ref{Sec5_Lem11}) we get
\begin{align}\label{Sec5_chain}
\begin{split}
\big\|\widehat{f \sigma}\big\|_{L^4(\R^d)}^4&  \leq (2\pi)^d\, Q(|f|, |f|_{\star}, |f|, |f|_{\star}) \\
&  \leq  (2\pi)^d\, Q(|f|_{\sharp}, |f|_{\sharp}, |f|_{\sharp}, |f|_{\sharp})\\
& \leq (2\pi)^d\, 2^{-d+3}\,\omega_{d-2}\, \frac{3}{4} \,H_d(|f|_{\sharp}^2)\\
& \leq   \frac34\,(2\pi)^d\, 2^{-d+3}\,\frac{\omega_{d-2}}{\omega_{d-1}^2}\, H_d({\bf 1}) \,\|f\|_{L^2(\S^{d-1})}^4.
\end{split}
\end{align}
This inequality is sharp since $f = {\bf 1}$ verifies the equalities in all the steps. 

\medskip

If $f \in L^2(\S^{d-1})$ is a complex-valued extremizer of \eqref{Sec5_chain}, by Lemma \ref{Sec5_Lem10} (or Lemma \ref{Sec5_Lem11}) we must have $|f|_{\sharp} = \gamma\,{\bf 1}$, where $\gamma >0$ is a constant. By Lemma \ref{Sec5_Lem8} we must have $|f| = \gamma\,{\bf 1}$. By Lemma \ref{Sec5_Lem8.1} there is a measurable function $h:\ov{B(2)} \to \C$ such that 
\begin{equation*}
 f(\zeta_1)\, f(\zeta_2) = \gamma^2\, h(\zeta_1 + \zeta_2) 
\end{equation*}
for $\sigma^2-$a.e. $(\zeta_1, \zeta_2) \in (\S^{d-1})^2$. By Theorem \ref{Sec3_Char_ext} there exist $c \in \C\setminus \{0\}$ and $\nu \in \C^d$ such that 
$$f(\zeta) = c\,e^{\nu \cdot \zeta}$$
for $\sigma-$a.e. $\zeta \in \S^{d-1}$. Since $|f|$ is constant, we must have $\Re(\nu) = 0$ and $|c| = \gamma$. 

\medskip

Conversely, it is clear that the functions given by \eqref{CV_ext} verify the chain of equalities in \eqref{Sec5_chain}. This concludes the proof.
\end{proof}

\subsection{Spectral decomposition - Proof of Lemma \ref{Sec5_Lem11}} The case $d=3$ was proved by Foschi \cite[Theorem 5.1]{F}. Here we extend his method to dimensions $d = 4,5,6,7$.

\subsubsection{Funk-Hecke formula and Gegenbauer polynomials} We start by proving Lemma \ref{Sec5_Lem11} for even functions $g$ in the subspace $L^2(\S^{d-1}) \subset L^1(\S^{d-1})$ (the general statement for even functions in $L^1(\S^{d-1})$ will follow by a density argument). In this case we may decompose $g$ as a sum 
\begin{equation}\label{Sec5_Dec_Spherical_Harmonics}
g = \sum_{k \geq 0} Y_k,
\end{equation}
where $Y_k$ is a spherical harmonic of degree $k$ (see \cite[Chapter IV]{SW}). Since $g$ is even, we must have $Y_{2\ell+1} = 0$ in \eqref{Sec5_Dec_Spherical_Harmonics} for all $\ell\geq 0$. Note also that $Y_0 = \mu \,{\bf 1}$, where $\mu$ is the mean value of $g$ in $\S^{d-1}$. Let $$g_N = \sum_{k \geq 0}^N Y_k.$$ 
Since $g_N \to g$ in $L^2(\S^{d-1})$ as $N \to \infty$, we have that $g_N \to g$ in $L^1(\S^{d-1})$ as $N \to \infty$ and thus, by \eqref{Sec5_Cont_H_d}, $H_d(g_N) \to H_d(g)$ as $N \to \infty$. Therefore
\begin{align}\label{Sec6_H_d_decomposition}
\begin{split}
H_d(g) & = \lim_{N \to \infty} \sum_{j,k=0}^N \int_{\S^{d-1}} \int_{\S^{d-1}} \ov{Y_j(\zeta_1)}\, Y_k(\zeta_2)\, |\zeta_1 - \zeta_2|\,\big(4-|\zeta_1 -\zeta_2|^2 \big)^{\frac{d-3}{2}}\,\dsigma(\zeta_1) \, \dsigma (\zeta_2)\\
& = \lim_{N \to \infty} \sum_{j,k=0}^N \int_{\S^{d-1}} \ov{Y_j(\zeta_1)}\,\left( \int_{\S^{d-1}}  Y_k(\zeta_2)\, \phi_d(\zeta_1 \cdot \zeta_2)\,\dsigma(\zeta_2) \right) \dsigma (\zeta_1),
\end{split}
\end{align}
where 
\begin{equation}\label{Sec6_def_phi_d}
\phi_d(t) = 2^{\frac{d-2}{2}}\, (1 - t)^{\frac12}\, (1+t)^{\frac{d-3}{2}}.
\end{equation} 
The inner integral above may be evaluated via the {\it Funk-Hecke formula} \cite[Theorem 1.2.9]{DX} 
\begin{equation}\label{Sec6_FH}
\int_{\S^{d-1}}  Y_k(\zeta_2)\, \phi_d(\zeta_1 \cdot \zeta_2)\,\dsigma(\zeta_2) = \Lambda_k(\phi_d) \, Y_k(\zeta_1),
\end{equation}
with the constant $\Lambda_k(\phi_d)$ given by
\begin{equation}\label{Sec6_Def_Lambda_k}
\Lambda_k(\phi_d) = \omega_{d-2}\int_{-1}^{1} \frac{C^{\frac{d-2}{2}}_k(t)}{C^{\frac{d-2}{2}}_k(1)}\, \phi_d(t)\, (1-t^2)^{\frac{d-3}{2}}\, \dt,
\end{equation}
where $t \mapsto C^{\alpha}_k(t)$, for $\alpha >0$, are the {\it Gegenbauer polynomials} ({\it or ultraspherical polynomials}) defined in terms of the generating function 
\begin{equation}\label{Sec6_Def_Gegenbauer}
(1 - 2rt + r^2)^{-\alpha} = \sum_{k=0}^{\infty} C^{\alpha}_k(t)\, r^k.
\end{equation}
For bounded $t$, the left-hand side of \eqref{Sec6_Def_Gegenbauer} is an analytic function of $r$ (for small $r$) and the right-hand side of \eqref{Sec6_Def_Gegenbauer} is the corresponding power series expansion. Note that $C^{\alpha}_k(t)$ has degree $k$. The Gegenbauer polynomials $C^{\alpha}_k(t)$ are orthogonal in the interval $[-1,1]$ with respect to the measure $(1- t^2)^{\alpha - \frac12}\,\dt$. If we plug \eqref{Sec6_FH} back into \eqref{Sec6_H_d_decomposition} and use the orthogonality properties of the spherical harmonics we arrive at
\begin{equation}\label{Sec6_decom_H_g_Sph_Harm_Final}
H_d(g) = \sum_{k=0}^{\infty} \Lambda_k(\phi_d) \, \|Y_k\|^2_{L^2(\S^{d-1})}.
\end{equation}
Our goal here is prove the following result.

\begin{lemma}\label{Lem13}.
\begin{itemize}
\item[(i)] For $d=3$ we have $\Lambda_0(\phi_3) >0$ and $\Lambda_k(\phi_3) <0$ for all $k \geq1$.

\item[(ii)] For $d=4$ we have $\Lambda_0(\phi_4) >0$,  $\Lambda_{2k +1}(\phi_4) =0$ for all $k \geq0$, and $\Lambda_{2k}(\phi_4) <0$ for all $k \geq 1$.

\item[(iii)] For $d = 5,6,7$ we have $\Lambda_0(\phi_d),  \Lambda_1(\phi_d)>0$ and $\Lambda_k(\phi_d) <0$ for all $k \geq 2$.

\end{itemize}
\end{lemma}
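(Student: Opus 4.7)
The strategy is to evaluate each coefficient $\Lambda_k(\phi_d)$ sufficiently explicitly to read off its sign. Writing $\alpha := (d-2)/2$ and combining the Gegenbauer weight $(1-t^2)^{\alpha - 1/2}$ with $\phi_d(t) = 2^\alpha(1-t)^{1/2}(1+t)^{\alpha - 1/2}$, I would recast \eqref{Sec6_Def_Lambda_k} as
\begin{equation*}
\Lambda_k(\phi_d) = \frac{\omega_{d-2}\,2^{\alpha}}{C^{\alpha}_k(1)}\int_{-1}^{1} C^{\alpha}_k(t)\,(1-t)^{\alpha}(1+t)^{2\alpha - 1}\,dt,
\end{equation*}
then invoke the Rodrigues formula
\begin{equation*}
C^{\alpha}_k(t)(1-t^2)^{\alpha - 1/2} = \frac{(-1)^k\Gamma(\alpha + 1/2)\Gamma(k + 2\alpha)}{2^k k!\,\Gamma(2\alpha)\Gamma(k + \alpha + 1/2)}\,\frac{d^k}{dt^k}(1-t^2)^{k + \alpha - 1/2}
\end{equation*}
and integrate by parts $k$ times. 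The boundary terms vanish because $\phi_d(\pm 1) = 0$ and the surviving factors $(1-t^2)^{k + \alpha - 1/2 - j}$ (for $0 \leq j \leq k-1$) vanish at $\pm 1$ to positive order. What remains is a positive multiple of $\int_{-1}^{1} \phi_d^{(k)}(t)(1-t^2)^{k + \alpha - 1/2}\,dt$, which the Leibniz rule reduces to a sum of elementary Beta integrals $\int_{-1}^1 (1-t)^p (1+t)^q\,dt = 2^{p+q+1}B(p+1, q+1)$.

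For $d = 3$ the factor $(1+t)^{\alpha - 1/2} = 1$ collapses the Leibniz sum to a single term and reproduces Foschi's identity $\Lambda_k(\phi_3) = -16\pi/[(2k-1)(2k+1)(2k+3)]$, whose sole sign-changing factor $(2k - 1)$ is negative only at $k = 0$. For $d = 4$, $\phi_4(t) = 2(1-t^2)^{1/2}$ is even, so $\phi_4^{(k)}$ has parity $(-1)^k$; integrating against the even weight $(1-t^2)^{k + 1/2}$ kills all odd coefficients, while the pointwise negativity of $\phi_4^{(k)}$ for even $k \geq 2$ (obtained by induction from the concavity of $(1-t^2)^{1/2}$) combined with direct Beta evaluation yields $\Lambda_{2m}(\phi_4) = -8\omega_2/[(2m-1)(2m+1)^2(2m+3)]$, again pivoting on $(2m - 1)$. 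For $d = 5, 6, 7$ (with $\alpha = 3/2, 2, 5/2$), the Leibniz expansion of $\phi_d^{(k)}$ has $2$, infinitely many, and $3$ surviving terms respectively; summing the corresponding Beta integrals and simplifying should produce explicit rational formulas for $\Lambda_k(\phi_d)$ containing precisely one linear factor in the denominator whose sign flips between $k = 1$ and $k = 2$, thereby yielding $\Lambda_0, \Lambda_1 > 0 > \Lambda_k$ for all $k \geq 2$.

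The main obstacle is the sign verification in cases $d = 5, 6, 7$, particularly establishing that $\Lambda_1(\phi_d) > 0$ and $\Lambda_2(\phi_d) < 0$. Here the Leibniz expansion of $\phi_d^{(k)}$ contributes several terms of opposite signs, so that the desired inequality emerges only after a nontrivial algebraic cancellation. The case $d = 6$ is particularly delicate because $(1+t)^{3/2}$ has a non-terminating Taylor series, forcing one either to sum the infinite Leibniz expansion or to bypass it via a direct Gegenbauer expansion of $(1-t)^{1/2}(1+t)^{3/2}$ in the basis $\{C^2_j\}_{j \geq 0}$. That this sign pattern is expected to break down for $d \geq 8$ is precisely what restricts the Christ-Shao-Foschi spectral method to part (a) of Theorem \ref{Thm1} and motivates the $SO(d)$-symmetrization argument of Section 4 for parts (b) and (c).
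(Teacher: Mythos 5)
Your Rodrigues-formula reduction is correct and is a genuinely different route from the paper's: with $\alpha=(d-2)/2$ the boundary terms in the $k$-fold integration by parts do vanish, and $\Lambda_k(\phi_d)$ is indeed a positive multiple of $\int_{-1}^1\phi_d^{(k)}(t)\,(1-t^2)^{k+\alpha-1/2}\,\dt$, whereas the paper instead writes $C^{3/2}_k=P_{k+1}'$, $C^2_k=\tfrac12 Q_{k+1}'$, $3C^{5/2}_k=P_{k+2}''$ and determines the moments against $t^j$ by recursions in $j$ coming from the three-term recurrence (Proposition \ref{Prop14}), with computer algebra finishing the closed forms. Your cases (i) and (ii) essentially go through: the formula $\Lambda_k(\phi_3)=-16\pi/[(2k-1)(2k+1)(2k+3)]$ is the correct one; for $d=4$ the parity argument kills the odd coefficients, the even derivatives of $(1-t^2)^{1/2}$ are indeed negative on $(-1,1)$ (since $f''=-(1-t^2)^{-3/2}$ has a power series with positive coefficients), and your closed form $-8\omega_2/[(2m-1)(2m+1)^2(2m+3)]$ agrees with what \eqref{lambdabeta} yields.

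The genuine gap is part (iii), which is the heart of the lemma and which you yourself flag as ``the main obstacle'': for $d=5,6,7$ you never produce the closed forms nor verify $\Lambda_1(\phi_d)>0>\Lambda_2(\phi_d)$; you only assert that the Beta-integral sums ``should'' simplify to formulas with the right signs. Two concrete problems. First, your account of $d=6$ is mistaken: the Leibniz expansion of $\phi_6^{(k)}$ has exactly $k+1$ terms, not infinitely many --- the non-terminating binomial series of $(1+t)^{3/2}$ is irrelevant to differentiation; the only consequence is that no terms drop out, so one must evaluate a length-$(k+1)$ sum of Beta integrals in closed form (a terminating hypergeometric summation you do not carry out), and your proposed bypass --- expanding $(1-t)^{1/2}(1+t)^{3/2}$ in the basis $\{C^2_j\}$ --- is just a restatement of the problem of computing the $\Lambda_k(\phi_6)$, not a method for it. Second, your structural prediction that the final formulas contain ``precisely one linear factor in the denominator whose sign flips between $k=1$ and $k=2$'' is not what happens: for $d=5$ the sign pattern comes from the quadratic numerator factor $(3-3k-k^2)$, which turns negative already at $k=1$, playing against the denominator product $(2k-3)(2k-1)$, which is negative exactly at $k=1$; for $d=7$ two sign-changing quadratic factors appear. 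So the signs at $k=0,1,2$ emerge from a cancellation more delicate than your plan anticipates, and without actually performing the closed-form evaluation (as the paper does via its moment recursions) the proposal does not prove (iii).
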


\noindent{\sc Remark:} For $d \geq 8$ we start to observe that $\Lambda_2(\phi_d)>0$. This is the basic reason why the method presented here only works (as it is) for dimensions up to $7$.

\medskip

Assuming Lemma \ref{Lem13} let us conclude the proof of Lemma \ref{Sec5_Lem11}.

\begin{proof}[Conclusion of the proof of Lemma \ref{Sec5_Lem11}] Using Lemma \ref{Lem13} in \eqref{Sec6_decom_H_g_Sph_Harm_Final}, and the fact that $g \in L^2(\S^{d-1})$ is an even function, we find
\begin{align*}
H_d(g)  = \sum_{k=0}^{\infty} \Lambda_k(\phi_d) \, \|Y_k\|^2_{L^2(\S^{d-1})}&  \leq  \Lambda_0(\phi_d) \, \|Y_0\|^2_{L^2(\S^{d-1})} + \Lambda_1(\phi_d) \, \|Y_1\|^2_{L^2(\S^{d-1})} \\
& = \Lambda_0(\phi_d) \, \|Y_0\|^2_{L^2(\S^{d-1})}\\
& = H_d(\mu {\bf 1}) \\
& = |\mu|^2 H_d({\bf 1}).
\end{align*}
Equality occurs if and only if $Y_k \equiv0$ for all $k \geq 2$, which means that $g = Y_0$ is a constant function.

\medskip

Now let $h \in L^1(\S^{d-1})$ be an even function. For each $N >0$ consider the truncation
\begin{equation*}
h_N(\zeta) = \left\{
\begin{array}{cc}
h(\zeta) \ & {\rm if}\ |h(\zeta)| \leq N \vspace{0.15cm}\\

N \frac{h(\zeta)}{|h(\zeta)|} \ & {\rm if}\ |h(\zeta)| > N.
\end{array}
\right.
\end{equation*}
Note that each $h_N$ is an even function in $L^2(\S^{d-1})$ and $h_N \to h$ in $ L^1(\S^{d-1})$ as $N \to \infty$. If $\mu _N = \frac{1}{\omega_{d-1}} \int_{\S^{d-1}} h_N(\zeta)\,\dsigma(\zeta)$ and $\mu = \frac{1}{\omega_{d-1}} \int_{\S^{d-1}} h(\zeta)\,\dsigma(\zeta)$, then $\mu_N \to \mu$ and $H_d(h_N) \to H_d(h)$ as $N \to \infty$. Consider an orthonormal basis of spherical harmonics $\{Z_{kj}\}$, $k \geq 0$, $j = 1,2, \ldots,D(d,k)$, where each harmonic $Z_{kj}$ has degree $k$ and $D(d,k)$ is the dimension \footnote{Explicitly, $D(d,k) = \binom{d + k -1}{d-1} - \binom{d + k -3}{d-1}$. For details see \cite[Chapter IV]{SW}.} of the vector space of spherical harmonics of degree $k$ on $\S^{d-1}$, and write 
\begin{equation*}
h_N = \sum_{k,j} \langle h_N, Z_{kj}\rangle \, Z_{kj}.
\end{equation*}
By the $L^2-$argument we have
\begin{equation}\label{Sec6_case_L_1}
H_d(h_N) = \sum_{k=0}^{\infty} \Lambda_k(\phi_d) \left( \sum_{j=1}^{D(d,k)} |\langle h_N, Z_{kj}\rangle|^2  \right) \leq  |\mu_N|^2 H_d({\bf 1}).
\end{equation}
Passing the limit as $N\to \infty$ we get 
\begin{equation}\label{Sec6_case_L_1_eq_2}
H_d(h) \leq  |\mu|^2 H_d({\bf 1}),
\end{equation}
which is our desired inequality. We already know that $\langle h, Z_{kj}\rangle = 0$ if $k$ is odd. If $\langle h, Z_{kj}\rangle \neq 0$ for some even $k \geq 2$  and some $j$, since $\langle h_N, Z_{kj}\rangle \to \langle h, Z_{kj}\rangle$ as $N \to \infty$, by Lemma \ref{Lem13} we would have a strict inequality in \eqref{Sec6_case_L_1} that would propagate to the limit \eqref{Sec6_case_L_1_eq_2}. Therefore, in order to have the equality in \eqref{Sec6_case_L_1_eq_2} we must have $\langle h, Z_{kj}\rangle = 0$ for all $k \geq 1$ and all $j$. This implies that $h$ must be constant.
\end{proof}

\subsubsection{Computing the $\Lambda_k(\phi_d)$} We are left with the final task to prove Lemma \ref{Lem13}. In this section we accomplish this by explicitly computing the values of $\Lambda_k(\phi_d)$ via a recursive argument. To simplify the notation let us consider the {\it Legendre polynomials} defined by
\begin{equation*}
P_k(t) := C_k^{1/2}(t).
\end{equation*} 
The next proposition lays the ground for our recursions.
\begin{proposition}\label{Prop14}.
\begin{itemize}
\item[(i)] For $\alpha >0$ and $k \geq 0$ we have
\begin{equation}\label{Sec6_Prop14_eq1}
(2k + 2\alpha)\, t\, C^{\alpha}_k(t) = (k+1) \,C^{\alpha}_{k+1}(t) + (k + 2\alpha -1)\,C^{\alpha}_{k-1}(t).
\end{equation}
Note: We set $C_{-1}^{\alpha}(t) = 0$.

\smallskip

\item[(ii)] Let $k \geq 0$. The Legendre polynomials $P_k$ verify
\begin{align}
\gamma_k:=&\int_{-1}^1 (2-2t)^{1/2}\,P_k'(t)\,\dt=2\,(-1)^{k+1}+\frac{{2}}{2k+1}\,;\label{Sec6_Prop14_eq3}\\
\delta_k:=&\int_{-1}^1 (2-2t)^{3/2}\,P_k''(t)\,\dt= 8\,(-1)^{k}{k+1 \choose 2}+3\gamma_k. \label{Sec6_Prop14_eq4}
\end{align}
\end{itemize}
\end{proposition}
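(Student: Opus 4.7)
My plan is to treat the two parts of the proposition separately, each by elementary manipulation.

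For part (i), the identity \eqref{Sec6_Prop14_eq1} is the classical three-term recurrence for Gegenbauer polynomials, and the natural approach is to extract it from the generating function \eqref{Sec6_Def_Gegenbauer}. Setting $F(r,t)=(1-2rt+r^2)^{-\alpha}$ and differentiating with respect to $r$, one verifies the functional identity
\begin{equation*}
(1-2rt+r^2)\,\partial_r F(r,t) = 2\alpha(t-r)\,F(r,t).
\end{equation*}
Substituting the power series $F(r,t)=\sum_{k\geq 0} C^{\alpha}_k(t)\,r^k$ on both sides, expanding, and matching coefficients of $r^k$ yields precisely \eqref{Sec6_Prop14_eq1} (the convention $C^{\alpha}_{-1}\equiv 0$ handles the $k=0$ case).

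For $\gamma_k$ in part (ii), the plan is integration by parts. Since $\frac{d}{dt}(2-2t)^{1/2}=-(2-2t)^{-1/2}$, and since $(2-2t)^{1/2}$ vanishes at $t=1$ and equals $2$ at $t=-1$, with $P_k(-1)=(-1)^k$, the boundary term contributes $2(-1)^{k+1}$ and one is reduced to evaluating
\begin{equation*}
I_k := \int_{-1}^1 \frac{P_k(t)}{\sqrt{2-2t}}\,\dt.
\end{equation*}
I would compute $I_k$ via the Legendre generating function $\sum_k P_k(t)r^k=(1-2rt+r^2)^{-1/2}$: summing $r^k I_k$ and interchanging sum and integral, the substitution $u=\sqrt{1-t}$ reduces the result to the elementary integral $\frac{2}{\sqrt{2r}}\log\!\bigl(\frac{1+\sqrt r}{1-\sqrt r}\bigr)$, whose Taylor expansion $\frac{2\sqrt 2}{\sqrt{r}}\sum_{k\geq 0}\frac{r^{k+1/2}}{2k+1}$ identifies $I_k=\frac{2}{2k+1}$, giving \eqref{Sec6_Prop14_eq3}.

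For $\delta_k$, the plan is again a single integration by parts with $u=(2-2t)^{3/2}$ and $dv=P_k''(t)\,\dt$. Since $(2-2t)^{3/2}$ vanishes at $t=1$ and equals $8$ at $t=-1$, and since $\frac{d}{dt}(2-2t)^{3/2}=-3(2-2t)^{1/2}$, one finds $\delta_k=-8\,P_k'(-1)+3\gamma_k$. The value $P_k'(-1)$ is read off the Legendre ODE $(1-t^2)P_k''(t)-2tP_k'(t)+k(k+1)P_k(t)=0$ evaluated at $t=-1$, which gives $P_k'(-1)=(-1)^{k+1}\binom{k+1}{2}$, so $-8P_k'(-1)=8(-1)^k\binom{k+1}{2}$ and \eqref{Sec6_Prop14_eq4} follows. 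The only slightly delicate step is the evaluation of $I_k$: the generating-function computation is routine but requires careful bookkeeping with the log expansion; alternatively one can invoke the classical Fourier-Legendre identity $\int_{-1}^1 P_k(t)(1-t)^{-1/2}\dt=\frac{2\sqrt 2}{2k+1}$ directly from the literature.
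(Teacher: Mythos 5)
Your overall strategy coincides with the paper's: part (i) is obtained exactly as in the paper, by differentiating the generating function \eqref{Sec6_Def_Gegenbauer} in $r$ and matching coefficients of $r^k$, and part (ii) uses the same two integrations by parts with the same boundary evaluations $P_k(-1)=(-1)^k$ and $P_k'(-1)=(-1)^{k+1}\binom{k+1}{2}$. The two small divergences are harmless and even add value: for $P_k'(-1)$ you use the Legendre ODE at $t=-1$, whereas the paper differentiates the generating function in $t$ to get $\sum_k P_k'(-1)r^k=r(1+r)^{-3}$ (both are correct); and for $I_k=\int_{-1}^1 P_k(t)(2-2t)^{-1/2}\,\dt$ you compute the value directly, whereas the paper simply quotes it from \cite[Lemma 5.4]{F}, so your route is more self-contained. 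One bookkeeping slip to fix in that computation: with $u=\sqrt{1-t}$ one gets
\begin{equation*}
\sum_{k\geq 0} I_k\,r^k=\sqrt{2}\int_0^{\sqrt 2}\frac{\du}{\sqrt{(1-r)^2+2ru^2}}=\frac{1}{\sqrt r}\,\log\!\left(\frac{1+\sqrt r}{1-\sqrt r}\right)=2\sum_{k\geq 0}\frac{r^k}{2k+1},
\end{equation*}
i.e. the prefactor is $1/\sqrt r$, not $2/\sqrt{2r}=\sqrt{2}/\sqrt{r}$ as you wrote; your stated intermediate expression would yield $I_k=2\sqrt2/(2k+1)$, which is off by $\sqrt 2$ from the (correct) value $I_k=2/(2k+1)$ that you then use. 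Your fallback via the classical identity $\int_{-1}^1 P_k(t)(1-t)^{-1/2}\,\dt=\tfrac{2\sqrt 2}{2k+1}$ is correct and, after dividing by $\sqrt 2$, gives the right constant, so the argument goes through once the prefactor is corrected.
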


\begin{proof} {\it Part} (i). Differentiating \eqref{Sec6_Def_Gegenbauer} with respect to the variable $r$ yields
\begin{equation}\label{Sec6_eq_derivated}
2\alpha\,(t-r)\, \sum_{k=0}^{\infty} C^{\alpha}_k(t)\, r^k = (1 - 2rt + r^2)\,\sum_{k=0}^{\infty} k\,C^{\alpha}_k(t)\, r^{k-1}.
\end{equation}
Comparing the coefficients of $r^k$ on both sides of \eqref{Sec6_eq_derivated} we obtain \eqref{Sec6_Prop14_eq1}.

\medskip

\noindent{\it Part} (ii). To establish \eqref{Sec6_Prop14_eq3} we use integration by parts:

\begin{equation}\label{ibp}
\int_{-1}^1 (2-2t)^{1/2}\,P_k'(t)\,\dt=-2P_k(-1)+\int_{-1}^1\frac{P_k(t)}{(2-2t)^{1/2}}\,\dt.
\end{equation}
By evaluating 
\begin{equation}\label{Sec6_Def_Legendre}
(1 - 2rt + r^2)^{-1/2} = \sum_{k=0}^{\infty} P_k(t)\, r^k
\end{equation}
at $t=-1$ we find that $P_k(-1)=(-1)^k$. The value of the integral on the right-hand side of \eqref{ibp} was evaluated in the proof of \cite[Lemma 5.4]{F} and equals $\frac{2}{2k+1}$.

\medskip

As for identity \eqref{Sec6_Prop14_eq4}, integrate by parts once again:
\begin{equation*}
\int_{-1}^1 (2-2t)^{3/2}\,P_k''(t)\,\dt=-8P_k'(-1)+3\int_{-1}^1 (2-2t)^{1/2} \,P_k'(t)\,\dt.
\end{equation*}
Differentiating both sides of \eqref{Sec6_Def_Legendre} with respect to $t$ and then setting $t=-1$ yields 
\begin{equation*}
\frac{r}{(1+r)^3}=\sum_{k\geq 0} P_k'(-1)\,r^k.
\end{equation*}
Hence $P_k'(-1)=(-1)^{k+1}{k+1 \choose 2}$.
The result follows from this and \eqref{Sec6_Prop14_eq3}.
\end{proof}

We are now able to proceed to the proof of Lemma \ref{Lem13}.

\begin{proof}[Proof of Lemma \ref{Lem13}].

\medskip

\noindent {\it Step 1: Case} $d=3$. This was done in \cite[Lemma 5.4]{F}.

\medskip

\noindent {\it Step 2: Case} $d=4$. \noindent Throughout this proof let us rename
\begin{equation*}
Q_k(t) := C_k^1(t).
\end{equation*}
From \eqref{Sec6_Def_Gegenbauer} we find that $Q_k(1) = k+1$ for all $k \geq 0$, and from \eqref{Sec6_Prop14_eq1} we get
\begin{equation*}
2t\, Q_k(t)=Q_{k+1}(t)+Q_{k-1}(t).
\end{equation*}
Using this recursively we get, for $k \geq 2$, 
\begin{align*}
4\,t^2 Q_k(t)& =2t\,Q_{k+1}(t)+2t\,Q_{k-1}(t)\\
& =Q_{k+2}(t)+2\,Q_k(t)+Q_{k-2}(t),
\end{align*}
and thus
\begin{equation}\label{recQk}
(4t^2-2)\,Q_k(t)=Q_{k+2}(t)+Q_{k-2}(t).
\end{equation}
We are now ready to compute the coefficients $\Lambda_k(\phi_4)$:
\begin{align*}
\Lambda_k(\phi_4)& =\omega_2\int_{-1}^1\frac{Q_k(t)}{Q_k(1)}\,\phi_4(t)\,(1-t^2)^{1/2}\,\dt=\frac{\omega_2}{2(k+1)}\int_{-1}^1Q_k(t)\,\big(2-(4t^2-2)\big)\,\dt.
\end{align*}
Setting $\tau_k:=\int_{-1}^1Q_k(t)\,\dt$ and recalling \eqref{recQk}, we have 
\begin{equation}\label{lambdabeta}
\frac{2(k+1)}{\omega_2}\Lambda_k(\phi_4)=2\tau_k-\tau_{k-2}-\tau_{k+2}\,; \ \ \ \  (k\geq 2).
\end{equation}
The sequence of moments $\{\tau_k\}_{k \geq 0}$ can be computed explicitly. In fact, we claim that $\tau_{2j}=\frac{2}{2j+1}$ and $\tau_{2j+1}=0$. To verify this, recall by \eqref{Sec6_Def_Gegenbauer} that
\begin{equation*}
\sum_{k\geq 0} Q_k(t)\,r^k=\left(\sum_{k\geq 0} P_k(t)\,r^k\right)^2,
\end{equation*}
and so
\begin{equation*}
Q_k(t)=\sum_{\ell=0}^k P_{\ell}(t)\,P_{k-{\ell}}(t).
\end{equation*}
It follows that
\begin{equation*}
\tau_k=\int_{-1}^1 Q_k(t)\,\dt=\sum_{\ell=0}^k \int_{-1}^1 P_{\ell}(t)\,P_{k-\ell}(t) \,\dt.
\end{equation*}
By the orthogonality properties of Legendre polynomials, we find that $\tau_{2j+1}=0$. If $k=2j$ is even, then\footnote{This is also quoted in \cite[Lemma 5.4]{F}.}
\begin{equation*}
\tau_{2j}=\int_{-1}^1 P_j(t)^2\,\dt=\frac{2}{2j+1},
\end{equation*}
as claimed. Plugging this into \eqref{lambdabeta}, we immediately check that $\Lambda_{2j+1}(\phi_4)=0$ for every $j\geq 0$, and that
\begin{equation*}
\Lambda_{2j}(\phi_4)=\frac{\omega_2}{2(2j+1)}\left(\frac{4}{2j+1}-\frac{2}{2j-1}-\frac{2}{2j+3}\right)<0,
\end{equation*}
for every $j\geq 1$. If $j =0$ then
\begin{equation*}
\Lambda_0(\phi_4)=\omega_2\int_{-1}^1\phi_4(t)\,(1-t^2)^{1/2}\,\dt=2\,\omega_2\int_{-1}^1 (1-t^2)\,\dt=\frac{8\,\omega_2}{3}=\frac{32\pi}{3}>0.
\end{equation*}

\medskip

\noindent {\it Step 3: Case} $d=5$. In order to simplify the notation, we start again by relabeling
\begin{equation*}
R_k(t) := C^{3/2}_k(t).
\end{equation*}
The definition \eqref{Sec6_Def_Gegenbauer} gives us
\begin{equation}\label{defRk} 
(1 - 2rt + r^2)^{-3/2} = \sum_{k=0}^{\infty} R_k(t)\, r^k.
\end{equation}
Differentiating \eqref{Sec6_Def_Legendre} with respect to the variable $t$ yields
\begin{equation*}
r(1-2rt+r^2)^{-3/2}=\sum_{k\geq 0}P_k'(t)\,r^k.
\end{equation*}
Comparing the two last displays, one concludes that 
\begin{equation*}
R_k(t)=P_{k+1}'(t)
\end{equation*}
for every $k \geq 0$. From \eqref{defRk} we have $R_k(1)={{k+2}\choose{2}}$. It follows from \eqref{Sec6_def_phi_d} and \eqref{Sec6_Def_Lambda_k} that
\begin{align*}
\frac{1}{2\omega_3}{{k+2}\choose{2}}\Lambda_k(\phi_5)&=\int_{-1}^1(2-2t)^{1/2}\,R_k(t)\,(1+t)\,(1-t^2)\,\dt\\
&=\int_{-1}^1(2-2t)^{1/2}\,P_{k+1}'(t)\,(1+t-t^2-t^3)\,\dt\\
&= \gamma_{k+1}^{(0)} + \gamma_{k+1}^{(1)} - \gamma_{k+1}^{(2)} - \gamma_{k+1}^{(3)},
\end{align*}
where
\begin{equation*}
\gamma_{k}^{(j)} = \int_{-1}^1(2-2t)^{1/2}\,P_{k}'(t)\, t^j\,\dt.
\end{equation*}
Note that $\gamma_{0}^{(j)} = 0$ for any $j \geq 0$. From \eqref{Sec6_Prop14_eq3} we know the exact value of $\gamma_{k}^{(0)}$. From \eqref{Sec6_Prop14_eq1} we have (recall that we have set $R_{-1}(t) =0$)
\begin{equation*}
tR_k(t)=\frac{k+1}{2k+3}R_{k+1}(t)+\frac{k+2}{2k+3} R_{k-1}(t),
\end{equation*}
which plainly gives
\begin{align*}
\gamma_{k+1}^{(j+1)} & =  \frac{k+1}{2k+3} \gamma_{k+2}^{(j)} + \frac{k+2}{2k+3} \gamma_{k}^{(j)} 
\end{align*}
for all $k \geq 0$ and $j \geq 0$. The previous recursion tells us that we can explicitly compute the values of all $\gamma_{k}^{(j)}$ by just knowing the values of $\gamma_{0}^{(j)}$ for all $j \geq 0$ and the values of $\gamma_{k}^{(0)}$ for all $k \geq 0$. This computation leads us to
\begin{align*}
\frac{1}{2\omega_3}{{k+2}\choose{2}}\Lambda_k(\phi_5) & = \gamma_{k+1}^{(0)} + \gamma_{k+1}^{(1)} - \gamma_{k+1}^{(2)} - \gamma_{k+1}^{(3)}\\
& = \frac{768 ( k+1) ( k+2) (3 - 3 k - k^2)}{( 2 k-3) ( 2 k-1) ( 2 k+1) ( 2 k+3) ( 2 k+5) ( 2 k+7) (2 k+9)},
\end{align*}
and it follows that $\Lambda_0(\phi_5) , \Lambda_1(\phi_5) >0$ and $\Lambda_k(\phi_5) <0$ if $k\geq 2$, as claimed.

\medskip

\noindent {\it Step 4: Case} $d=6$. We set
\begin{equation*}
S_k(t) := C_k^2(t).
\end{equation*}
The definition \eqref{Sec6_Def_Gegenbauer} gives us
\begin{equation}\label{Sec6_def_S_k}
(1 - 2rt + r^2)^{-2} = \sum_{k=0}^{\infty} S_k(t)\, r^k\,,
\end{equation}
and it follows that $S_k(1) = {{k+3}\choose{3}}$. By differentiating \eqref{Sec6_Def_Gegenbauer}, in the case $\alpha = 1$, with respect to the variable $t$ and comparing coefficients with \eqref{Sec6_def_S_k} we find
\begin{equation}\label{Sec6_relation_S_Q}
S_k(t) = \tfrac12 \,Q_{k+1}'(t)
\end{equation}
for all $k \geq 0$. From \eqref{Sec6_def_phi_d} and \eqref{Sec6_Def_Lambda_k} it follows that
\begin{align*}
\frac{1}{4\,\omega_4}{{k+3}\choose{3}}\Lambda_k(\phi_6) & = \int_{-1}^1 S_k(t)\,(1 + t - 2 t^2 - 2 t^3 + t^4 + t^5)\,\dt\\
& = \epsilon_k^{(0)} + \epsilon_k^{(1)} - 2\epsilon_k^{(2)} - 2\epsilon_k^{(3)} + \epsilon_k^{(4)} + \epsilon_k^{(5)},
\end{align*}
where 
\begin{equation*}
\epsilon_k^{(j)} = \int_{-1}^1 S_k(t)\,t^j\,\dt.
\end{equation*}
Since Gegenbauer polynomials of odd degree are odd, it follows that $\epsilon_{2\ell+1}^{(0)}=0$ for every $\ell\geq 0$. On the other hand, using \eqref{Sec6_relation_S_Q}, we have
\begin{equation*}
\epsilon_{2\ell}^{(0)} = \int_{-1}^1 S_{2\ell}(t)\,\dt = \frac12 \big(Q_{2\ell+1}(1) - Q_{2\ell+1}(-1) \big) = 2(\ell + 1)
\end{equation*}
for every $\ell \geq 0$. From \eqref{Sec6_Prop14_eq1} we have (recall that we have set $S_{-1}(t) =0$)
\begin{equation*}
tS_k(t)=\frac{k+1}{2k+4}S_{k+1}(t)+\frac{k+3}{2k+4} S_{k-1}(t),
\end{equation*}
which plainly gives
\begin{align*}
\epsilon_{k}^{(j+1)} & =  \frac{k+1}{2k+4} \epsilon_{k+1}^{(j)} + \frac{k+3}{2k+4} \epsilon_{k-1}^{(j)},
\end{align*}
for all $k \geq 0$ and $j \geq 0$. Since we know the values of $\epsilon_k^{(0)}$ for all $k\geq 0$ and $\epsilon_{-1}^{(j)} = 0$ for all $j \geq 0$,  the recursion above completely determines the values of all $\epsilon_k^{(j)}$. A computation leads us to
\begin{equation*}
\frac{1}{4\,\omega_4}{{k+3}\choose{3}}\Lambda_k(\phi_6) = \left\{
\begin{array}{cc}
-\displaystyle\frac{8 ( k+2)}{( k-1) (k+1) ( k+3) (k+5)}\,, &  {\rm for} \ k \ {\rm even};\\
\\
-\displaystyle\frac{8 ( k+1) ( k+3)}{k( k-2) ( k+2) ( k+4) (k+6)}\,,&   {\rm for} \ k \ {\rm odd},
\end{array}
\right.
\smallskip
\end{equation*}
and it follows that $\Lambda_0(\phi_6) , \Lambda_1(\phi_6) >0$ and $\Lambda_k(\phi_6) <0$ if $k\geq 2$, as claimed.

\medskip

\noindent {\it Step 5: Case} $d=7$. The argument here is analogous to the case $d=5$. We start by relabeling 
\begin{equation*}
T_k(t):= C_k^{5/2}(t).
\end{equation*}
By the definition \eqref{Sec6_Def_Gegenbauer} we have
\begin{equation}\label{Sec6_def_T_k}
(1 - 2rt + r^2)^{-5/2} = \sum_{k=0}^{\infty} T_k(t)\, r^k\,,
\end{equation}
and thus $T_k(1) = {{k+4}\choose{4}}$. If we differentiate \eqref{Sec6_Def_Legendre} twice with respect to the variable $t$ and compare with \eqref{Sec6_def_T_k} we find that 
\begin{equation}\label{Sec6_rel_T_P''}
3T_k(t) = P''_{k+2}(t)
\end{equation}
for all $k \geq 0$. From \eqref{Sec6_def_phi_d}, \eqref{Sec6_Def_Lambda_k} and \eqref{Sec6_rel_T_P''} we have
\begin{align*}
\frac{3}{2\,\omega_5}{{k+4}\choose{4}}\Lambda_k(\phi_7) & = \int_{-1}^1 (2-2t)^{3/2} \,P_{k+2}''(t)\,(1+3t+2t^2-2t^3-3t^4-t^5)\,\dt\\
& = \delta_{k+2}^{(0)} + 3\delta_{k+2}^{(1)} + 2\delta_{k+2}^{(2)} - 2\delta_{k+2}^{(3)} - 3\delta_{k+2}^{(4)} - \delta_{k+2}^{(5)},
\end{align*}
where
\begin{equation*}
\delta_{k}^{(j)} :=  \int_{-1}^1 (2-2t)^{3/2} P_{k}''(t)\,t^j\,\dt.
\end{equation*}
The values of $\delta_{k}^{(0)}$, for $k \geq 0$, are given by Proposition \ref{Prop14}. From \eqref{Sec6_Prop14_eq1} we have (recall that we have set $T_{-1}(t) =0$)
\begin{equation*}
t\,T_k(t)=\frac{k+1}{2k+5}T_{k+1}(t)+\frac{k+4}{2k+5} T_{k-1}(t),
\end{equation*}
which gives us
\begin{align*}
\delta_{k+2}^{(j+1)} & =  \frac{k+1}{2k+5} \delta_{k+3}^{(j)} + \frac{k+4}{2k+5} \delta_{k+1}^{(j)},
\end{align*}
for all $k \geq 0$ and $j \geq 0$. Since we also know that $\delta_{0}^{(j)} = \delta_{1}^{(j)}= 0$, for $j \geq 0$, we can explicitly find the values of all the  $\delta_{k}^{(j)}$ from the recursion above. A computation leads to 
 \begin{align*}
& \frac{3}{2\omega_5}{{k+4}\choose{4}}\Lambda_k(\phi_7) \\
& =\frac{245760 ( k+1) ( k+2) ( k+3) ( k+4) (15 - 5 k - k^2) (-3 + 5 k + k^2)}{( 2 k-5) ( 2 k-3) ( 2 k-1) ( 2 k+1) ( 2 k+3) ( 2 k+5) ( 
    2 k+7) ( 2 k+9) ( 2 k+11) ( 2 k+13) ( 2 k+15)},
\end{align*}
and once again one can conclude that $\Lambda_0(\phi_7), \Lambda_1(\phi_7)>0$ and $\Lambda_k(\phi_7)<0$ if $k\geq 2$. 

\medskip

This completes the proof of Lemma \ref{Lem13}.
\end{proof}



\section*{Acknowledgements} 
\noindent The software {\it Mathematica} was used in some of the computations of the proof of Lemma \ref{Lem13}. We would like to thank Marcos Charalambides for clarifying the nature of some results in \cite{Ch}, and to Stefan Steinerberger and Keith Rogers for useful comments on the exposition. We are also thankful to William Beckner, Michael Christ, Damiano Foschi and Christoph Thiele, for helpful discussions during the preparation of this work. E. C. acknowledges support from CNPq-Brazil grants $302809/2011-2$ and $477218/2013-0$, and FAPERJ-Brazil grant $E-26/103.010/2012$. Finally, we would like to thank IMPA - Rio de Janeiro and HCM - Bonn for supporting research visits during the preparation of this work.



\end{document}